\tikzset{
>=stealth,
every picture/.style={thick},
graphs/every graph/.style={empty nodes},
}
\tikzstyle{vertex}=[
\tikzstyle{printersafe}=[decoration={snake,amplitude=0pt}]
\newcommand{\mult}{\operatorname{mult}}
\newcommand{\ord}{\operatorname{ord}}
\newcommand{\vol}{\operatorname{vol}}
\newcommand{\oo}{\mathcal{O}}
\newcommand{\pp}{\mathbb{P}}
\newcommand{\aaa}{\mathbb{A}}
\def\O#1.{\mathcal {O}_{#1}}
\def\pr #1.{\mathbb P^{#1}}
\def\af #1.{\mathbb A^{#1}}
\def\ses#1.#2.#3.{0\to #1\to #2\to #3 \to 0}
\def\xrar#1.{\xrightarrow{#1}}
\def\K#1.{K_{#1}}
\def\bA#1.{\mathbf{A}_{#1}}
\def\bM#1.{\mathbf{M}_{#1}}
\def\bL#1.{\mathbf{L}_{#1}}
\def\bB#1.{\mathbf{B}_{#1}}
\def\bK#1.{\mathbf{K}_{#1}}
\def\subs#1.{_{#1}}
\def\sups#1.{^{#1}}
\newtheorem{theorem}{Theorem}[section]
\newtheorem{lemma}[theorem]{Lemma}
\newtheorem{proposition}[theorem]{Proposition}
\newtheorem{corollary}[theorem]{Corollary}
\theoremstyle{definition}
\newtheorem{definition}[theorem]{Definition}
\newtheorem{remark}[theorem]{Remark}
\theoremstyle{remark}
\numberwithin{equation}{section}
\newcounter{rownumber}[figure]
\newcounter{rownumber-irr}[figure]
\newcounter{rownumber-p1}[figure]
\begin{document}

\title{K-stability of Fano threefolds of rank $3$ and degree $14$}
\begin{abstract}
We prove that all general smooth Fano threefolds of Picard rank $3$ and degree $14$ are K-stable, where the generality condition is stated explicitly.
\end{abstract}

\author[G.~Belousov]{Grigory Belousov}
\address{Bauman Moscow State Technical University, Moscow, Russia}
\email{belousov\_grigory@mail.ru}

\author[K.~Loginov]{Konstantin Loginov}
\address{Steklov Mathematical Institute of Russian Academy of Sciences, Moscow, Russia.
National Research University Higher School of Economics, Russian Federation, Laboratory of Algebraic Geometry, NRU HSE.
Centre of Pure Mathematics, MIPT, Moscow, Russia}
\email{loginov@mi-ras.ru}

\maketitle

\section{Introduction}

We work over the field of complex numbers. Three-dimensional smooth Fano varieties are known to belong to $105$ deformation families. In \cite{ChAll}, the problem of characterising K-stable Fano threefolds was solved for a general element in each of these families. In particular, it was proven that a general smooth Fano threefolds with Picard rank $3$ and of degree $14$ is K-stable, see \cite[5.11]{ChAll}. To this end, the authors showed K-stability of one particular Fano threefold in this family, and then used the fact that K-stabiilty is an open condition in families. In this paper, we show that all general smooth Fano threefolds of Picard rank $3$ and degree $14$ are K-stable, where the generality condition is stated explicitly.

Recall that a smooth Fano threefold $X$ with Picard rank $3$ and of degree $14$ can be realized as a divisor in the linear system $|L^{\otimes2}\otimes p^*\oo(2,3)|$ in the $\pp^2$-bundle
\[
Q=\mathbb{P}_{\mathbb{P}^1\times\mathbb{P}^1}(\oo\oplus\oo(-1,-1)^{\oplus2})
\]
over $\mathbb{P}^1\times\mathbb{P}^1$ where $L$ is the tautological bundle and $p\colon Q\to \mathbb{P}^1\times\mathbb{P}^1$ is the natural projection. We denote the natural conic bundle structure on $X$ by $\pi=p|_X\colon X\to \mathbb{P}^1\times\mathbb{P}^1$.
Also, we have two fibrations $\pi_1\colon X\rightarrow\pp^1$ and $\pi_2\colon X\rightarrow\pp^1$ into del Pezzo surfaces. Let $F_1$ and $F_2$ be general fibers of $\pi_1$ and $\pi_2$, respectively. Then $F_1$ is a del Pezzo surface of degree six and $F_2$ is a del Pezzo surface of degree three.
For more details see Section \ref{sec-prelim}.

Note that $X$ is a trigonal Fano variety, which means that the anti-canonical divisor $-K_X$ is very ample, but the image of the map given by the linear system $|-K_X|$ is not the intersection of quadrics. This can be seen, for example, by restricting $-K_X$ to a smooth cubic surface which is a general fiber of $\pi_2$. However, this family of Fano varieties was erroneously omitted in Iskovskikh’s list of trigonal Fano threefolds.

For a smooth Fano variety $X$ of Picard rank $3$ and degree $14$, we formulate the following generality condition:

\

$(\star)$\quad For any multiple fiber $C'=2C$ of $\pi$, the fiber $F_2$ of $\pi_2$ that contains $C$ has only $A_1$ singularities along $C$.

\

The meaning of this condition is that, if $\pi$ has multiple fibers, then the singularities of $\pi_2$ are general along it.
Our main result is as follows:

\begin{theorem}
\label{main-theorem}
Let $X$ be a smooth Fano threefold with Picard rank $3$ and degree $14$ such that the condition $(\star)$ is satisfied. Then $X$ is K-stable.
\end{theorem}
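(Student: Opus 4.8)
The plan is to prove K-stability by estimating stability thresholds (delta-invariants) using the Abban--Zhuang method of filtrations adapted to the conic bundle and del Pezzo fibration structures. The standard criterion says that $X$ is K-stable if and only if $\delta(X)>1$, and one shows this by verifying that $\beta(E)>0$ for every prime divisor $E$ over $X$, equivalently by bounding local $\delta$-invariants from below at every point. Concretely, I would use the inequality $\delta(X)\geq \min_{p\in X}\delta_p(X)$ and reduce the global estimate to a pointwise one. First I would fix a point $p\in X$ and choose a well-adapted flag: the surface fibers $F_1$ (degree six del Pezzo) and $F_2$ (degree three del Pezzo) through $p$, together with a curve in one of these fibers, give a natural flag to which the Abban--Zhuang refinement applies. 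The idea is to estimate $\delta_p(X)$ from $\delta_{p}(F_i, \text{restricted divisor})$ after computing the relevant Okounkov-type expansions of $-K_X$ along the flag.

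The key computational input is the Zariski-decomposition analysis of $-K_X - u\,F$ (and its further restriction) as $u$ varies, where $F$ runs over the fiber classes $F_1$, $F_2$, and the conic-bundle fibers of $\pi$. Using the explicit realization of $X$ as a divisor in the $\pp^2$-bundle $Q$, I would compute the intersection theory on $X$ (the Picard lattice of rank $3$, the intersection numbers $F_1\cdot F_2$, the classes of the bidegree-$(2,3)$ pullbacks, and the conic-bundle discriminant) and then run the Abban--Zhuang estimate fiber by fiber. For a general point lying on a smooth fiber $F_2$ (a smooth cubic surface), the local $\delta$-invariant of the cubic surface is well understood, and the pullback-plus-restriction machinery should push the bound comfortably above $1$; the same strategy applies to points on $F_1$, a sextic del Pezzo. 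The systematic case division is by the type of fiber through $p$ and by whether $p$ is special (lies on the discriminant curve of $\pi$, on a singular fiber of $\pi_2$, or on a multiple fiber).

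The role of the generality hypothesis $(\star)$ is exactly to control the remaining, genuinely problematic locus: points lying over a multiple fiber $C'=2C$ of the conic bundle $\pi$. Over such a locus the fiber $F_2$ containing $C$ is singular, and without a bound on how bad these singularities are, the Abban--Zhuang estimate for points of $F_2$ near $C$ can fail to exceed $1$. Condition $(\star)$ guarantees that $F_2$ has only $A_1$ singularities along $C$, which pins down the local geometry (a nodal cubic surface singularity) and allows one to compute the restricted log canonical thresholds and the local $\delta$-invariant of $F_2$ along $C$ explicitly, again yielding a bound $>1$. I therefore expect the main obstacle to be this last case: carefully analyzing the divisors over the multiple fibers and the $A_1$-singular fibers of $\pi_2$, computing their $\beta$-invariants (equivalently, the log canonical thresholds of the relevant pencils), and checking that the generality condition suffices to force positivity. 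The remaining cases for general points on smooth fibers should reduce to standard del Pezzo $\delta$-invariant computations and comparatively routine Zariski-decomposition bookkeeping.
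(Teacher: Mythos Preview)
Your outline is essentially the paper's strategy: Abban--Zhuang flags built from the del Pezzo fibers $F_1$, $F_2$ (and curves therein), explicit Zariski decompositions of $-K_X-uF_i$, a pointwise case split according to the fiber geometry at $p$, and the hypothesis $(\star)$ invoked exactly to control the local $\delta$-estimate at points on a multiple conic fiber where $F_2$ is singular. One ingredient your sketch does not mention but the paper uses throughout is the distinguished divisor $D\simeq\mathbb{P}^1\times\mathbb{P}^1$ (the bisection of $\pi$, contracted to the node of $Y$): it generates the effective cone together with $F_1,F_2$, appears as the negative part $N(u)$ in every Zariski decomposition you will compute, and forces a separate case $p\in D$ handled with a flag lying entirely on $D$; once you incorporate $D$, your plan matches the paper's proof.
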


Actually, we expect that any Fano threefold $X$ with Picard rank $3$ and of degree $14$ is K-stable. However, we are unable to prove this at the moment. Since multiple fibers of $\pi$ correspond to singular points of the discriminant curve $\Delta\subset \mathbb{P}^1\times\mathbb{P}^1$ of $\pi$, we have the following

\begin{corollary}
If the discriminant curve of $\pi\colon X\to \mathbb{P}^1\times\mathbb{P}^1$ is smooth, then $X$ is K-stable.
\end{corollary}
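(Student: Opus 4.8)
The plan is to deduce this directly from Theorem~\ref{main-theorem} by showing that smoothness of the discriminant curve forces the generality condition $(\star)$ to hold vacuously. The one ingredient beyond the theorem is the correspondence recalled just above the statement: the multiple fibers $C'=2C$ of the conic bundle $\pi\colon X\to\mathbb{P}^1\times\mathbb{P}^1$ correspond precisely to the singular points of the discriminant curve $\Delta\subset\mathbb{P}^1\times\mathbb{P}^1$. Geometrically, over a smooth point of $\Delta$ the fiber of $\pi$ degenerates to a pair of distinct lines, whereas a multiple fiber occurs exactly over a singularity of $\Delta$, where the conic collapses to a double line.

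With this in hand the argument is immediate. If $\Delta$ is smooth, it has no singular points, and so $\pi$ has no multiple fibers whatsoever. But $(\star)$ is a statement quantified over all multiple fibers of $\pi$; in the absence of any such fiber it is vacuously satisfied, with no condition on the singularities of $\pi_2$ left to check. Theorem~\ref{main-theorem} then applies verbatim and yields the K-stability of $X$.

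Because the correspondence between multiple fibers and singularities of $\Delta$ is already in place (see Section~\ref{sec-prelim}), there is no real obstacle here and no estimate to carry out. The only point worth flagging is that one must invoke this correspondence in the correct direction—that smoothness of $\Delta$ rules out \emph{all} multiple fibers, not merely some—so that the universal quantifier in $(\star)$ ranges over the empty set and the hypothesis of Theorem~\ref{main-theorem} is met.
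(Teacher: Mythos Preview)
Your proposal is correct and follows exactly the reasoning the paper intends: the sentence immediately preceding the corollary states that multiple fibers of $\pi$ correspond to singular points of $\Delta$, so smoothness of $\Delta$ makes condition $(\star)$ vacuous and Theorem~\ref{main-theorem} applies. The only minor quibble is that this correspondence is asserted in the introduction rather than established in Section~\ref{sec-prelim}, so your cross-reference is slightly off, but the argument itself is identical to the paper's.
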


Another corollary of Theorem \ref{main-theorem} is as follows.

\begin{corollary}
\label{cor-a1-a2}
If singular fibers of $\pi_1$ and $\pi_2$ have singular points of type $A_1$ and $A_2$, then $X$ is K-stable.
\end{corollary}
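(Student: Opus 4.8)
The plan is to derive the corollary from Theorem \ref{main-theorem} by checking the generality condition $(\star)$. Since the multiple fibers $C'=2C$ of $\pi$ are exactly the reduced double lines lying over the nodes of the discriminant curve $\Delta\subset\pp^1\times\pp^1$, I fix such a node $p$ and set $C=\pi^{-1}(p)_{\mathrm{red}}$. If $\mathrm{pr}_2\colon\pp^1\times\pp^1\to\pp^1$ denotes the projection with $\pi_2=\mathrm{pr}_2\circ\pi$, then the fiber of $\pi_2$ containing $C$ is $F_2=\pi^{-1}(\ell)$ with $\ell=\mathrm{pr}_2^{-1}(\mathrm{pr}_2(p))$, and likewise $C\subset F_1=\pi^{-1}(\ell')$ for the other ruling $\ell'$ through $p$. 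In these terms $(\star)$ is the purely local statement that $F_2$ has only $A_1$ singularities at the points of $C$.

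First I would set up a local normal form. Choosing coordinates $(u,v)$ on $\pp^1\times\pp^1$ centered at $p$ with $\ell=\{v=0\}$ and $\ell'=\{u=0\}$, I present the conic bundle near $p$ by the symmetric $3\times3$ matrix $M(u,v)$ of its defining quadratic form, normalized so that $M(0,0)=\diag(1,0,0)$; thus the fiber over $p$ is the double line $\{x=0\}$ and $\Delta=\{\det M=0\}$ has a double point at $p$. A direct calculation then identifies all singular points along $C$ in terms of the two binary quadratic forms $Q_u,Q_v$ obtained by restricting $\partial_uM$ and $\partial_vM$ to the double line: a point $[0:s:t]\in C$ is a singular point of $X$ if and only if $Q_u(s,t)=Q_v(s,t)=0$, of $F_2$ if and only if $Q_u(s,t)=0$, and of $F_1$ if and only if $Q_v(s,t)=0$. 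In particular, smoothness of $X$ along $C$ is precisely the condition that $Q_u$ and $Q_v$ have no common zero.

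The heart of the argument is the local classification of the singularity of $F_2$ at a zero of $Q_u$, and here I expect the following dichotomy. At a simple zero the Hessian of the local equation of $F_2$ has full rank, so the singularity is an ordinary node $A_1$; at a zero of multiplicity $k\ge2$ --- which happens exactly when the ruling $\ell$ is tangent to a branch of $\Delta$ at $p$ --- completing the square first in $x$ and then in $u$ leaves an equation in which the transverse fiber variable $z$ occurs only to even order, so that the singularity is of odd type $A_{2k-1}$, in the simplest case $A_3$. The upshot is that along a multiple fiber the surface $F_2$ can only acquire singularities of type $A_1,A_3,A_5,\dots$, and in particular never an $A_2$; the same statement holds for $F_1$ with $Q_v$ in place of $Q_u$.

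Granting this, the corollary follows formally. The hypothesis guarantees that every singular fiber of $\pi_1$ and of $\pi_2$ has only du~Val singularities of type $A_1$ or $A_2$; in particular $F_2$ has no singularity of type $A_{\ge3}$. Since the previous step shows that the only non-$A_1$ singularity $F_2$ can have along $C$ is of odd type $A_{\ge3}$, the surface $F_2$ must have $A_1$ singularities along every multiple fiber $C$, which is exactly condition $(\star)$ (the analogous statement for $F_1$ shows that the $\pi_1$-part of the hypothesis is the symmetric condition, and that the $\pi_2$-part alone already suffices). Theorem \ref{main-theorem} then yields K-stability. I expect essentially all of the work to be concentrated in the third step, namely verifying the parity of the $A_n$-type along the double line; once it is known that a simple tangency of the ruling to $\Delta$ produces an $A_3$ rather than an $A_2$, the remaining steps are bookkeeping.
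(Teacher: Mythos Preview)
Your overall strategy coincides with the paper's: verify condition $(\star)$ and invoke Theorem~\ref{main-theorem}. The difference is in how $(\star)$ is checked. The paper dispatches this in one line by citing \cite[Lemma~1.5]{Zh}, which classifies the du~Val singularities lying on a multiple fiber of a conic bundle as $2A_1$, $A_3$, or $D_m$ with $m\geq4$; since none of these is $A_2$, the hypothesis forces $A_1$ along $C$, and $(\star)$ follows. Your route replaces this citation by an explicit local computation in the conic-bundle matrix $M(u,v)$, which is a genuinely different argument but aims at the same conclusion.

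Your local analysis is essentially on the right track. After completing the square in $x_0$ one obtains $\tilde{x}_0^2=g(x_1,u)$, and the crucial observation you are circling around is that $g$ factors as $g=u\cdot\tilde g$ with $\tilde g(x_1,0)=-Q_u(x_1,1)$, because the double line $\{u=0\}$ is a component of the branch curve. When $\partial_u\tilde g(0,0)\neq0$ one can complete the square in $u$ and obtain $g\sim\tilde u^2-cQ_u(x_1,1)^2$, a perfect square in the transverse variable, which indeed forces $A_{2k-1}$ at a zero of $Q_u$ of multiplicity $k$. However, your claimed dichotomy is incomplete: when $\partial_u\tilde g(0,0)=0$ the Hessian of $g$ drops to rank $0$ and the singularity is of type $D_m$ (or worse), not $A_{2k-1}$; this is precisely the $D_m$ branch in Zhang's list. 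For the corollary this omission is harmless, since the hypothesis excludes $D_m$ along with $A_{\geq3}$, but you should either note this case explicitly or observe that all you actually need is ``never $A_2$'', which your perfect-square argument (in the corank-$1$ case) and the corank-$2$ alternative together already give. One further shortcut, in the spirit of Lemma~\ref{lem-singular-cubic}: on the minimal resolution, integrality of the multiplicities in the fiber containing $\tilde C$ with multiplicity $2$ immediately rules out an $A_2$ point on $C$ by a two-line linear-algebra check, bypassing the normal-form computation entirely.
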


In fact, one can prove that a general variety $X$ such that $\pi$ has a multiple fiber, satisfies the condition $(\star)$, and so by Theorem \ref{main-theorem} it is K-stable.

\

\textbf{Acknowledgements.}
The authors thank Ivan Cheltsov for proposing the problem and for useful discussions, and Constantin Shramov for helpful comments on the draft of the paper. The work of the second author was performed at the Steklov International Mathematical Center and supported by the Ministry of Science and Higher Education of the Russian Federation (agreement no. 075-15-2022-265), by
the HSE University Basic Research Program, and the Simons Foundation. The work is supported by the state assignment of MIPT (project FSMG-2023-0013). The author is a Young Russian Mathematics award winner and would like to thank its sponsors and jury. The authors thank the anonymous referee for many helpful remarks on the first version of the paper.

\section{Del Pezzo surfaces}

In this section, we collect some elementary facts on the geometry of del Pezzo surfaces.

\begin{remark}
\label{lem-duVal_contractions}
Let $S$ be a normal Gorenstein del Pezzo surface with at worst du Val singularities. Then every birational contraction of relative Picard rank $1$ is a contraction of a $(-1)$-curve $C$ where by a $(-1)$-curve we mean that $C$ is a smooth rational curve with $K_S\cdot C=-1$.
\end{remark}
\subsection{Sextic del Pezzo surface with du Val singularities}
\label{subsect-sextic}
Let $Y$ be a sextic del Pezzo surface with du Val singularities. Denote by $Y'\to Y$ the minimal resolution of $Y$. According to \cite{HW81}, there exists a morphism $Y'\to\mathbb{P}^2$ which is a composition of blow ups of smooth points. We have the following possibilities, cf. \cite[Proposition 8.3]{CoTs88} (in the diagrams below, $\bullet$ denotes a $(-1)$-curve, $\circ$ denotes a $(-2)$-curve, an edge represents that the corresponding curves intersect):\begin{enumerate}
\item
$Y$ is smooth, in which case $Y'=Y$, $Y'$ is a blow up of $3$ non-collinear points on $\mathbb{P}^2$, $\rho(Y)=4$, and $Y'$ has $6$ $(-1)$-curves.
$$
\xymatrix@R=0.8em{
\\
\bullet\ar@{-}[rr]\ar@{-}[d]&&\bullet\ar@{-}[rr]&&\bullet\ar@{-}[d]\\
\bullet\ar@{-}[rr]&&\bullet\ar@{-}[rr]&&\bullet\\
}
$$

\item
$Y$ has a unique singular point of type ${A}_1$, $Y'$ is a blow up of a point and two infinitely near points on $\mathbb{P}^2$ such that these $3$ points are not collinear. In this case $\rho(Y)=3$, and $Y'$ has $4$ $(-1)$-curves.
$$
\xymatrix@R=0.8em{
\\
\bullet\ar@{-}[r]&\bullet\ar@{-}[r]&\circ\ar@{-}[r]&\bullet\ar@{-}[r]&\bullet\\
}
$$

\item
$Y$ has a unique singular point of type ${A}_1$, $Y'$ is a blow up of $3$ collinear points on $\mathbb{P}^2$. In this case $\rho(Y)=3$, and $Y'$ has $3$ $(-1)$-curves.
$$
\xymatrix@R=0.8em{
&\bullet\ar@{-}[dd]&\\
&&\\
&\circ\ar@{-}[rd]\ar@{-}[ld]&\\
\bullet&&\bullet\\
}$$

\item
$Y$ has one singular point of type ${A}_2$, $Y'$ is a blow up of $3$ infinitely near points, $\rho(Y)=2$, and $Y'$ has $2$ $(-1)$-curves.
$$
\xymatrix@R=0.8em{
&&\bullet\\
\circ\ar@{-}[r]&\circ\ar@{-}[ru]\ar@{-}[rd]&\\
&&\bullet\\
}$$

\item
$Y$ has two singular points of types ${A}_1$, $Y'$ is a blow up of a point and two infinitely near points, $\rho(Y)=2$, and $Y'$ has $2$ $(-1)$-curves.
$$
\xymatrix@R=0.8em{
\\
\circ\ar@{-}[r]&\bullet\ar@{-}[r]&\circ\ar@{-}[r]&\bullet\\
}
$$

\item
$Y$ has two singular points of type ${A}_1$ and ${A}_2$, $Y'$ is a blow up of three infinitely near points, $\rho(Y)=1$, and $Y'$ has $1$ $(-1)$-curve.
$$
\xymatrix@R=0.8em{
\\
\circ\ar@{-}[r]&\circ\ar@{-}[r]&\bullet\ar@{-}[r]&\circ\\
}
$$
\end{enumerate}

\

\

Now, we consider cubic del Pezzo surfaces with du Val singularities.

\begin{lemma}
\label{lem-singular-cubic}
Let $F$ be a del Pezzo surface of degree $3$ with du Val singularities. Assume that $F$ admits a conic bundle structure $\pi\colon F\to \mathbb{P}^1$ with a multiple fiber $C'=2C$. Assume that the singularities of $F$ along $C$ are of type $A_1$. Then
\begin{enumerate}
\item
there are precisely two singular points of type $A_1$ contained in $C$;
\item
the possibilities for the singularities of $F$ are as follows: $2A_1$, $3A_1$, $4A_1$, $2A_1A_2$, $2A_1A_3$;
\item
there exists a unique $(-1)$-curve intersecting $C$ outside of singular points.
\end{enumerate}
\end{lemma}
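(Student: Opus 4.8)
The plan is to pass to the minimal resolution $\sigma\colon\widetilde F\to F$, which is a weak del Pezzo surface of degree $3$, hence a blow-up of $\pp^2$ at six (possibly infinitely near) points with $\rho(\widetilde F)=7$ and $-K_{\widetilde F}=\sigma^*(-K_F)$, and to read everything off the lifted conic bundle $\tilde\pi\colon\widetilde F\to\pp^1$, whose general fibre $f$ has $f^2=0$ and $-K_{\widetilde F}\cdot f=2$. Each $A_1$ point of $F$ on $C$ gives a single $(-2)$-curve meeting the strict transform $\widetilde C$, and all these curves lie over the multiple point, so the fibre there is $2\widetilde C+\sum_i m_iE_i$ with the $E_i$ pairwise disjoint $(-2)$-curves. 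To prove (1) I would argue by intersection theory: from $f\cdot E_j=0$ one gets $m_j=\widetilde C\cdot E_j$, adjunction for $\widetilde C$ gives $\widetilde C^2=-1$, and then $f\cdot\widetilde C=0$ becomes $\sum_i(\widetilde C\cdot E_i)^2=2$. As the $\widetilde C\cdot E_i$ are positive integers, the only solution is two curves each meeting $\widetilde C$ once, so there are exactly two $A_1$ points on $C$ and the fibre is $2\widetilde C+E_1+E_2$.

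For (2) I would count fibre components against the Picard number. For a $\pp^1$-fibration $\rho(\widetilde F)=2+\sum_t(c_t-1)$, where $c_t$ is the number of components of $\tilde\pi^{-1}(t)$, so the total budget is $\sum_t(c_t-1)=5$, of which the multiple fibre already consumes $2$. I would then list the admissible special fibres with their costs: an ordinary reducible fibre $\ell_1+\ell_2$ (cost $1$, no singularity), a chain $\ell_1+E+\ell_2$ producing an extra $A_1$ (cost $2$), a chain $\ell_1+G_1+G_2+\ell_2$ producing an $A_2$ (cost $3$), and a second multiple fibre $2D+G_1+2G_2+G_3$ with the double curve attached to the middle of an $A_3$ chain, producing an $A_3$ (cost $3$); horizontal $(-2)$-curves contribute extra $A_1$ points at cost $0$. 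Distributing the residual budget $3$ among these, and using that the $(-2)$-curves form a sub-root-system of $E_6$ (so that $A_2^{\perp}$, resp. $A_3^{\perp}$, contains at most $2A_1$), leaves exactly $2A_1,3A_1,4A_1,2A_1A_2,2A_1A_3$.

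For (3) I would use that a conic bundle on a cubic surface is projection from a line $\ell_0$, so $f=-K_F-\ell_0$ with $\ell_0$ a $(-1)$-curve; then $\ell_0\cdot f=-K_F\cdot\ell_0-\ell_0^2=1-\ell_0^2=2$, so $\ell_0$ is a bisection meeting $C$ once away from $\operatorname{Sing}F$, giving existence. For uniqueness, any $(-1)$-curve $\ell$ meeting $C$ outside $\operatorname{Sing}F$ has $\ell\cdot f=2(\ell\cdot C)\ge2$, whence $\ell\cdot\ell_0=-K_F\cdot\ell-\ell\cdot f=1-\ell\cdot f<0$; since two distinct irreducible curves cannot meet negatively, $\ell=\ell_0$.

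The main obstacle is (2): the bookkeeping succeeds only if every degenerate-fibre type is correctly identified, and the two delicate points are that the $A_3$ can arise from the non-reduced fibre $2D+G_1+2G_2+G_3$, which costs only $3$ rather than the $4$ of an $A_3$ chain with two reduced ends, and that further $A_1$ points may be carried by horizontal $(-2)$-curves at no cost; it is then the $E_6$ orthogonality bounds, rather than the budget alone, that actually close off the list. A secondary point is to verify in (3) that $\ell_0$ genuinely avoids the two nodes on $C$, for which one uses the explicit configurations produced in (2).
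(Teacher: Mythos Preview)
Your argument for (1) matches the paper's independent proof (the paper computes $\widetilde C^2=-k/2$ from the $A_1$ pullback formula and excludes $k=4$; you get $\widetilde C^2=-1$ from adjunction and solve $\sum(\widetilde C\cdot E_i)^2=2$). For (2) and (3) your route genuinely differs. The paper proves (2) in one line by citing the Bruce--Wall classification \cite{BrW79} of singular cubic surfaces: once two $A_1$ points are present, the only types containing $2A_1$ are those listed. Your fibre-budget enumeration is a self-contained alternative, but one ingredient is off: there are no horizontal $(-2)$-curves on $\widetilde F$, because every $(-2)$-curve on a weak del Pezzo is contracted by the anticanonical map $\sigma$, hence lies over a point of $F$ and so sits in a single fibre of $\tilde\pi$. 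This is harmless (it removes a phantom source of $A_1$'s rather than creating spurious cases), but it means the extra $A_1$'s in $3A_1$ and $4A_1$ must come from vertical configurations, for instance a second double fibre for $4A_1$, and you would still owe a check that your list of degenerate fibre types is exhaustive; the paper's citation sidesteps all of this bookkeeping. For (3), the paper defers to the case-by-case intersection diagrams in \cite{De23}, whereas your argument is uniform: $\ell_0\cdot C=1$ gives existence (one should add that $\ell_0$ cannot pass through an $A_1$ point, since comparing $(\sigma^*\ell_0)^2=-1$ with $\widetilde{\ell_0}^2=-1$ forces the coefficient of the exceptional curve in $\sigma^*\ell_0$ to vanish), and for any other $(-1)$-curve $\ell$ meeting $C$ at a smooth point, $\ell\cdot\ell_0=-K_F\cdot\ell-\ell\cdot f\le 1-2<0$ forces $\ell=\ell_0$. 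Here your approach is cleaner than the paper's.
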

\begin{proof}
The first claim follows from the classification of singular points along a multiple fiber of a conic bundle on del Pezzo surfaces with du Val singularities, see \cite[Lemma 1.5]{Zh} or  \cite[Lemma 2.12]{ChPr21}. However, for the convenience of the reader, we give an independent proof of the first claim.

Consider the minimal resolution $\phi\colon \widetilde{F}\to F$. Then $\widetilde{F}$ is a smooth weak del Pezzo surface, which means that $-K_{\widetilde{F}}$ is big and nef. Let $\widetilde{\pi}=\pi \circ \phi$ be the induced conic bundle structure on $\widetilde{F}$. First of all note that since $C'=2C$ is a multiple fiber of $\pi$, $F$ should be singular at some point contained in $C$.
 Let $\widetilde{C}$ be the strict transform of $C$ on $\widetilde{F}$. Then the integer $\widetilde{C}^2=-k/2$ where $k$ is the number of singular points (which by assumption are of type $A_1$) contained in $C$. Since $\widetilde{F}$ is a smooth weak del Pezzo surface, we have $k\in\{2,4\}$. However, the case $k=4$ is not realized because this would contradicts to the fact that $(\pi^*C)^2=0$.




The second claim follows from the classification of the configurations of singular points on a cubic del Pezzo surface with du Val singularities. Assuming that $F$ has two points of type $A_1$, we have the following possibilities:
$2A_1$, $3A_1$, $4A_1$, $2A_1A_2$, $2A_1A_3$ (cf. \cite[]{BrW79}).


The last claim of the lemma follows from considering the diagrams of intersection of the $(-1)$-curves and the $(-2)$-curves on $\widetilde{F}$ as shown in \cite{De23}.
\end{proof}

\section{Fano threefold of rank $3$ and degree $14$}
\label{sec-prelim}
Throughout the paper, we shall use the following notation. Let $X$ be a smooth Fano threefold with Picard rank $3$ and of degree $14$. Let $Q=\pp_S(\oo\oplus\oo(-H)\oplus\oo(-H))$, where $H$ is the divisor of degree $(1,1)$ on $S=\mathbb{P}^1\times\mathbb{P}^1$.
Denote by $p\colon Q\to\mathbb{P}^1\times\mathbb{P}^1$ the natural projection.
Then $X$ can be realized as a divisor in the linear system $|L^{\otimes2}\otimes p^*\oo(2,3)|$ on the $Q$ such that $X\cap Y$ is irreducible, where $L$ is the tautological bundle,  $Y\in |L|$.  Let $[s_0:s_1:t_0:t_1:u_0:u_1:u_2]$ be homogeneous coordinates on the fourfold $Q$ such that
\[
wt(s_0)=(1,0,0),\ \
wt(s_1)=(1,0,0),\ \
wt(t_0)=(0,1,0),\ \
wt(t_1)=(0,1,0),
\]
\[
wt(u_0)=(0,0,1),\ \
wt(u_1)=(1,1,1),\ \
wt(u_2)=(1,1,1).
\]
The projection $\pi$ is given by the formula
$
[s_0:s_1:t_0:t_1:u_0:u_1:u_2]\mapsto [s_0:s_1:t_0:t_1]
$
where we consider $[s_0:s_1:t_0:t_1]$ as homogeneous coordinates on $\mathbb{P}^1\times\mathbb{P}^1$.  Since $X$ is the divisor in the linear system  $|L^{\otimes 2}\otimes p^*\mathcal{O}(2,3)|$, it is given by the following equation of weight $(2, 3, 2)$:
\[
f_1(t_0,t_1)u_1^2+f_2(t_0,t_1)u_2^2+f_3(t_0,t_1)u_1u_2+g_1(s_0,s_1,t_0,t_1)u_0u_1+g_2(s_0,s_1,t_0,t_1)u_0u_2+h(s_0,s_1,t_0,t_1)u_0^2=0
\] where $f_1,f_2,f_3$ are homogeneous polynomials of degree one  in $t_j$, $g_1,g_2$ are homogeneous polynomials that have degree one in $s_0,s_1$ and degree two in $t_0,t_1$, $h$ is a homogeneous polynomial that has degree two in $s_0,s_1$ and degree three in $t_0,t_1$.

We have two fibrations $\pi_1\colon X\rightarrow\pp^1$ and $\pi_2\colon X\rightarrow\pp^1$ into del Pezzo surfaces. Let $F_1$ and $F_2$ be general fibers of the del
Pezzo fibrations $\pi_1$ and $\pi_2$, respectively. Note that $F_1$ is a del Pezzo surface of degree six, $F_2$ is a del Pezzo surface of degree three. By \cite{MM83} there exists a divisor $D\simeq\pp^1\times\pp^1$ such that $\oo_D(D)=\oo_{\pp^1\times\pp^1}(-1,-1)$.
We see that $-K_X\sim D+F_1+2F_2$.

By \cite[p. 71]{Mat95}, see also the erratum \cite[p. 42]{Mat23}, the cone of effective divisors $\overline{\mathrm{Eff}}(X)$ is generated by the surfaces $D,F_1,F_2$. The Mori cone is generated by the two rulings $L_1$ and $L_2$ on $D\simeq\mathbb{P}^1\times\mathbb{P}^1$ (chosen in such a way that $F_i=\pi^{-1}(\pi(L_i)$), and by a general fiber $C$ of a conic bundle $\pi\colon X\to\mathbb{P}^1\times\mathbb{P}^1$. Note that $D$ is a bisection of the conic bundle $\pi$. The intersection theory on $X$ is as follows:
\[
F_1|_D = 2L_1, \quad \quad \quad F_2|_D = L_2, \quad \quad \quad F_1\cdot F_2 = C, \quad \quad \quad F_i^2=0,
\]
\[
D^3 = D|_D^2 = 2, \quad \quad \quad D\cdot F_1\cdot F_2 = 2, \quad \quad \quad D^2\cdot F_1 = -2, \quad \quad \quad D^2 \cdot F_2 = -1,
\]
\[
D\cdot L_i = -1, \quad \quad \quad \quad F_i\cdot L_i = 0, \quad \quad \quad \quad F_1\cdot L_2 = 2, \quad \quad \quad \quad F_2\cdot L_1 = 1.
\]
We have the following diagram of contractions on $X$:
\begin{equation}
\label{diagram-contractions}
\begin{tikzcd}
V \ar[dd, "\mu_1"'] \arrow[rrrr,bend left,dashed,"\gamma"]  \ar[rr, "\psi_1"] & & Y & & V'  \ar[ll, "\psi_2"'] \ar[dd, "\mu_2"] \\
.& & \ar[lld, "\pi_1"'] \ar[llu, "\phi_1"] X \ar[rrd, "\pi_2"] \ar[u, "\phi"]  \ar[rru, "\phi_2"'] \ar[d, "\pi"'] & & \\
\mathbb{P}^1 & & \ar[ll, "\mathrm{pr}_1"] \mathbb{P}^1\times \mathbb{P}^1 \ar[rr, "\mathrm{pr}_2"'] & & \mathbb{P}^1
\end{tikzcd}\vspace{0.1cm}
\end{equation}
Here $\phi$ is a contraction of a divisor $D$ to an ordinary double point on singular Fano threefold $Y$ (see \cite[2.3.8]{Ta22}, \cite[\textnumero9]{CKGSh23}), $\phi_1$ and $\phi_2$ are contractions of $D$ along two different rulings so that $V$ and $V'$ are smooth threefolds, $\mu_1$ is a fibration into del Pezzo surfaces of degree $8$, and $\mu_2$ is a fibration into del Pezzo surfaces of degree $4$, $\psi_1$ and $\psi_2$ are two small contractions with the exceptional locus $\mathbb{P}^1$, $\gamma$ is the Atiyah flop.


\begin{lemma}
\label{Lemm1}
Let $P\in X$ be a point.
Let $F_1,F_2$ be the fibers of $\pi_1$ and $\pi_2$, respectively, that contain~$P$. Assume that $F_i$ is singular at $P$. Then another surface $F_j$ is smooth at $P$.
\end{lemma}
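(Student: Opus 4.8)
The plan is to translate the two singularity conditions into rank constraints on the differential of the conic bundle $\pi=(\pi_1,\pi_2)\colon X\to\mathbb{P}^1\times\mathbb{P}^1$ at $P$, and then to observe that these constraints are incompatible. Set $\bar P:=\pi(P)$. Since $\pi_i=\mathrm{pr}_i\circ\pi$, the scheme-theoretic fiber $F_i=\pi_i^{-1}(\pi_i(P))$ has tangent space $T_PF_i=(d\pi|_P)^{-1}\bigl(\ker(d\,\mathrm{pr}_i)\bigr)$, where $\ker(d\,\mathrm{pr}_1)=0\oplus T\mathbb{P}^1$ and $\ker(d\,\mathrm{pr}_2)=T\mathbb{P}^1\oplus 0$ are the two coordinate lines in $T_{\bar P}(\mathbb{P}^1\times\mathbb{P}^1)$. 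Writing $r:=\rank(d\pi|_P)$ and $W:=\operatorname{im}(d\pi|_P)$, the standard formula for the dimension of a preimage gives
\[
\dim T_PF_i=(3-r)+\dim\bigl(W\cap\ker(d\,\mathrm{pr}_i)\bigr).
\]
Because the reduced surface is a closed subscheme of the scheme-theoretic fiber, the hypothesis that $F_i$ is singular at $P$ (equivalently, that its Zariski tangent space has dimension $\ge 3$) forces $\dim T_PF_i\ge 3$.

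The crucial intermediate step I would establish is the bound $r\ge 1$, that is, $d\pi|_P\ne 0$. For this I use that $\pi=p|_X$ is the restriction of the $\mathbb{P}^2$-bundle projection $p\colon Q\to\mathbb{P}^1\times\mathbb{P}^1$. Then $\ker(d\pi|_P)=T_PX\cap\ker(dp|_P)=T_PX\cap T_P(Q_{\bar P})$, and since the fiber $Q_{\bar P}\cong\mathbb{P}^2$ has a two-dimensional tangent space we get $\dim\ker(d\pi|_P)\le 2$; as $X$ is a smooth threefold this yields $r=3-\dim\ker(d\pi|_P)\ge 1$. This remains valid even on the exceptional locus where all six coefficient functions of the defining equation of $X$ vanish and the fiber of $\pi$ jumps to the full plane $Q_{\bar P}$, since there $T_P(Q_{\bar P})\subseteq T_PX$ forces $\dim\ker(d\pi|_P)=2$, hence $r=1$.

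It then remains to combine these facts. Since $\dim W=r\ge 1$ while each $\ker(d\,\mathrm{pr}_i)$ is one-dimensional, the intersection $W\cap\ker(d\,\mathrm{pr}_i)$ has dimension at most $1$; consequently $\dim T_PF_i\ge 3$ is possible only when $r=1$ and $W=\ker(d\,\mathrm{pr}_i)$, i.e.\ when $W$ equals the $i$-th coordinate line. If both $F_1$ and $F_2$ were singular at $P$, this would force $W=0\oplus T\mathbb{P}^1$ and $W=T\mathbb{P}^1\oplus 0$ at the same time, which is absurd because these two lines meet only at the origin. Hence at most one of $F_1,F_2$ is singular at $P$, which is exactly the assertion of the lemma. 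I expect the only delicate point to be the clean handling of possibly non-reduced or jumping fibers; the tangent-space inclusion $T_PF_i^{\mathrm{red}}\subseteq T_PF_i$ together with the uniform bound $r\ge 1$ is designed precisely to absorb these cases, so no separate analysis through the explicit equation of $X$ should be necessary.
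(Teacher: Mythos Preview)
Your argument is correct and is essentially a coordinate-free rephrasing of the paper's proof: the paper picks local coordinates $(x,y,z,t)$ on $Q$ with $F_1=\{f=t=0\}$, $F_2=\{f=z=0\}$, observes that singularity of $F_1$ (resp.\ $F_2$) at $P$ kills the linear part of $f(x,y,z,0)$ (resp.\ $f(x,y,0,t)$), and concludes that both together would kill the full linear part of $f$, contradicting smoothness of $X$. Your rank condition $r\ge 1$ is exactly the smoothness of $X$ read through $\ker(d\pi|_P)=T_PX\cap T_PQ_{\bar P}$, and the identification $W=\ker(d\,\mathrm{pr}_i)$ is the vanishing of the corresponding partial derivatives; the two arguments unwind to the same linear-algebra computation.
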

\begin{proof}
Let $Q$ be the $\pp^2$-bundle over $S=\mathbb{P}^1\times\mathbb{P}^1$ such that $X$ is a divisor in the linear system $|L^{\otimes2}\otimes\oo(2,3)|$ on $Q$. We can pick a local chart $\aaa^4\subset Q$ such that $P\in\aaa^4$. Put $X'=\aaa^4\cap X$, $F'_1=\aaa^4\cap F_1$, $F'_2=\aaa^4\cap F_2$. Moreover, we may choose coordinates $(x,y,z,t)$ on $\aaa^4$ such that $X'$ is given by $f(x,y,z,t)=0$, $P$ is $(0,0,0,0)\in\aaa^4$, $F'_1$ is given by $f(x,y,z,t)=0$ and $t=0$, $F'_2$ is given by $f(x,y,z,t)=0$ and $z=0$. Assume that $F'_1$ is singular at $P$. Then the linear term in $f(x,y,z,0)$ vanishes. Assume that $F'_2$ also is singular at $P$. Then the linear term in $f(x,y,0,t)$ vanishes as well. Hence, the linear term in $f(x,y,z,t)$ also vanishes. So, $X'$ is singular at $P$, which is a contradiction.
\end{proof}

\begin{lemma}
\label{Lemm2}
Let $F_2$ be a fiber of $\pi_2$. Then $F_2$ is normal.
\end{lemma}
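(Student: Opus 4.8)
The plan is to use Serre's criterion. Since $F_2$ is a fibre of the morphism $\pi_2\colon X\to\mathbb{P}^1$ from the smooth threefold $X$ to a smooth curve, it is the zero scheme of one function locally, hence a Cartier divisor in $X$, hence Cohen--Macaulay, so it automatically satisfies $S_2$. Thus $F_2$ is normal as soon as it is regular in codimension $1$, i.e. as soon as $\operatorname{Sing}(F_2)$ is finite. The whole task therefore reduces to showing that $F_2$ is not singular along a curve.

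To control $\operatorname{Sing}(F_2)$ I would work with the conic bundle $\pi|_{F_2}\colon F_2\to\mathbb{P}^1$ obtained by restricting $\pi$ to $F_2=\pi^{-1}(\mathbb{P}^1\times\{\lambda\})$. Fixing $[t_0:t_1]=\lambda$ in the equation of $X$ realises $F_2$ as $\{\Phi=0\}$ in the $\mathbb{P}^2$-bundle over $\mathbb{P}^1_{[s_0:s_1]}$, where
\[
\Phi=a_1u_1^2+a_2u_2^2+a_3u_1u_2+G_1(s)u_0u_1+G_2(s)u_0u_2+H(s)u_0^2,
\]
with $a_i$ constants, $G_i$ linear and $H$ quadratic in $s$; write $M=M(s,\lambda)$ for the symmetric $3\times3$ matrix of this conic. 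Over a point $s$ with smooth conic fibre the $u$-gradient of $\Phi$ is already nonzero, so $F_2$ is smooth there, and hence $\operatorname{Sing}(F_2)$ lies in the degenerate fibres. A single degenerate fibre (even a double line) contributes only finitely many singular points of $F_2$, since at such a point one also needs $\partial_s\Phi=0$, and $\partial_s\Phi$ does not vanish identically along the fibre. Consequently $F_2$ can be singular along a curve only when \emph{every} fibre degenerates, i.e. when $\det M(s,\lambda)\equiv0$; in that case an envelope argument ($\Phi(s,P(s))\equiv0$ together with $\nabla_u\Phi(P(s))=0$ forces $\partial_s\Phi(P(s))=0$) shows that $F_2$ is in fact singular along the section $Z$ swept out by the nodes of the fibres.

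The heart of the proof is to exclude this last possibility using the smoothness of $X$. The identity $\det M(s,\lambda)\equiv0$ means precisely that the ruling $R_\lambda=\mathbb{P}^1\times\{\lambda\}$ is a component of the discriminant curve $\Delta=\{\det M(s,t)=0\}\subset\mathbb{P}^1\times\mathbb{P}^1$ of $\pi$. Writing $\Delta=R_\lambda\cup\Delta'$ and using that $\det M$ has positive degree in $s$ (namely $2$), the residual curve $\Delta'$ meets $R_\lambda$, so $\Delta$ is singular at the intersection points. Over such a point, provided the fibre is a pair of distinct lines, the standard local model $x^2+y^2+\det M\cdot z^2=0$ of a conic bundle shows that the node of the fibre is a singular point of $X$ exactly because $\det M$ has a critical point there; this contradicts the smoothness of $X$, and completes the argument once the rank-$2$ hypothesis is verified.

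The main obstacle is precisely the book-keeping at the degenerate points. A smooth conic bundle over a smooth surface \emph{can} have double-line (rank $1$) fibres over nodes of its discriminant, so the clean implication ``$\Delta$ singular $\Rightarrow$ $X$ singular'' applies only at rank-$2$ points. I expect the hardest part to be checking that, whenever $R_\lambda\subset\Delta$, one can locate a rank-$2$ singular point of $\Delta$ on $R_\lambda$ (this is automatic when $q=a_1u_1^2+a_3u_1u_2+a_2u_2^2$ is nondegenerate), and to dispose of the remaining highly special configurations --- where $q$ is a perfect square and $G_1,G_2$ become proportional or vanish along $R_\lambda$ --- by a direct Jacobian computation on the full equation of $X$, invoking the global smoothness of $X$ and the irreducibility of $X\cap Y$.
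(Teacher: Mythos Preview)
Your opening reduction via Serre's criterion is correct and matches the paper's first line. After that the two arguments diverge, and your plan has an unfinished step in each branch.

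For the vertical case, your claim that $\partial_s\Phi$ cannot vanish identically along a single degenerate fibre is not true as stated: since
\[
\partial_s\Phi \;=\; u_0\bigl(G_1'(s)\,u_1+G_2'(s)\,u_2+H'(s)\,u_0\bigr),
\]
it vanishes on the whole line $u_0=0$, and it also vanishes on any double line $\ell=0$ for which $G_1'u_1+G_2'u_2+H'(s_0)u_0$ happens to be proportional to $\ell$. The first scenario can be excluded by invoking the irreducibility of $X\cap Y$ (with $Y\in|L|$) built into the paper's description of $X$, but you do not say this, and the second scenario is not addressed at all. The paper handles the vertical case differently and without computation: if the singular curve $B$ is set-theoretically a $\pi$-fibre, then $2B$ is also a multiple fibre of the conic bundle $\pi|_{F_1}$, so $F_1$ is singular somewhere on $B$; Lemma~\ref{Lemm1} then gives the contradiction, since $F_1$ and $F_2$ cannot both be singular at a point of a smooth $X$.

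For the horizontal case you correctly isolate the obstruction --- over a smooth total space, singular points of $\Delta$ are allowed precisely at rank-$1$ fibres --- and you leave the resolution as an expectation (``I expect the hardest part to be\ldots''). The paper sidesteps this entirely with a global argument you do not use: the divisor $D\simeq\mathbb{P}^1\times\mathbb{P}^1$ is a bisection of $\pi$, and $D\cap F_2\simeq\mathbb{P}^1$ is a degree-$2$ cover of the base $\mathbb{P}^1$. Writing $\Delta=\Delta_1\cup\Delta_2$ with $\Delta_1=R_\lambda$ of type $(1,0)$ and $\Delta_2$ of type $(1,5)$, one has $\Delta_1\cdot\Delta_2=5$, so $\pi|_{F_2}$ acquires five multiple fibres $2C'_i$; since $C'_i\cdot D=1$, each contributes a ramification point of $D\cap F_2\to\mathbb{P}^1$, and five ramification points for a double cover $\mathbb{P}^1\to\mathbb{P}^1$ contradict the Hurwitz formula.

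In short, the paper replaces your local smoothness analysis by one global input, the bisection $D$, together with Lemma~\ref{Lemm1}, and thereby avoids both the rank case-split and the Jacobian computations that your plan leaves open.
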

\begin{proof}
It is enough to prove that $F_2$ is smooth at codimension $1$.
Assume that there exists a fiber $F_2$ of $\pi_2$ such that $F_2$ has a curve of singularities $B$.
We claim that $B\cdot F_1>0$. Indeed, if $B\cdot F_1=0$, then $B$ is a set-theoretic fiber of $\pi\colon X\rightarrow\pp^1\times\pp^1$. Moreover, intersecting $F_1$ with $F_2$ we see that $B$ is a multiple fiber of $\pi$.
Thus, $B$ is a multiple fiber of $\pi_2$ as well. So, $B$ contains a singular point of $F_1$, which contradicts Lemma \ref{Lemm1}. This shows that $B\cdot F_1>0$. So, $B$ is a horizontal curve of conic bundle, i.e. $\pi(B)$ is a curve of type $(1,0)$ on $\pp^1\times\pp^1$. Then every fiber of $\pi$ in $F_2$ is singular.

Let $\Delta\subset \pp^1\times\pp^1$ be the discriminant curve of $\pi$. Note that $\Delta=\Delta_1\cup\Delta_2$, where $\Delta_1$ is a divisor of type $(1,0)$, $\Delta_1$ is a divisor of type $(1,5)$. Since $\Delta_1\cdot\Delta_2=5$, and $\Delta_1$ intersects $\Delta_2$ transversally, we see that there exist five multiple fibers of $\pi$ on $F_2$. On the other hand, since a general fiber of $\pi_2$ meets $D\simeq \mathbb{P}^1\times\mathbb{P}^1$ by an irreducible smooth rational curve which is a ruling on $D$, we see that $F_2\cap D$ is isomorphic to $\mathbb{P}^1$.

So, there exists a two-fold covering $\psi\colon F_2\cap D\longrightarrow\pp^1$. Let us denote the points of intersection $\Delta_1\cap\Delta_2$ as $P_1,\ldots,P_5$. We see that fibers of $\pi$ over $P_1,\ldots,P_5$ are multiple fibers $C_1,\ldots,C_5$. Then $C_i=2C'_i$. Since a general fiber of $\pi$ meets $D$ in two points, we see that $C'_i\cdot D=1$. Then $\psi$ has at least five ramification points. This contradicts to the Hurwitz formula applied to the map $\psi$. The proof is completed.
\end{proof}

\begin{lemma}
\label{Lemm2a}
Let $F_1$ be a fiber of $\pi_1$. Then either $F_1$ is normal or the curve of singularities of $F_1$ coincides with $F_1\cap D$. 
\end{lemma}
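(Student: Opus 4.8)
The plan is to first reduce, exactly as in the proof of Lemma~\ref{Lemm2}, to the study of a one--dimensional singular locus, and then to pin that locus down using the explicit equation of $X$. Since $F_1$ is a fibre of $\pi_1$, it is a Cartier divisor in the smooth threefold $X$, hence Cohen--Macaulay; in particular it satisfies Serre's condition $S_2$, so $F_1$ is normal if and only if it is regular in codimension one. Thus I may assume that $\operatorname{Sing}(F_1)$ contains an irreducible curve $B$, and the task becomes to prove $B=(F_1\cap D)_{\mathrm{red}}$. Here I use that $F_1=\pi^{-1}(\ell)$, where $\ell$ is the fibre along which the coordinate $[t_0:t_1]$ is constant (this is the fibre of degree six, since it meets $\Delta$ in two points). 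The crucial feature is that the binary part $q:=f_1u_1^2+f_3u_1u_2+f_2u_2^2$ of the defining equation $E$ is then constant along $\ell$, while $D=\{u_0=0\}\cap X$ cuts out on each fibre conic the two points $\{u_0=0,\ q=0\}$.

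Next I would describe $\operatorname{Sing}(F_1)$ as the critical locus of $\pi_1$ inside $F_1$: in an affine chart in which $\pi_1$ is the $t$--coordinate, a point $P\in F_1$ is singular precisely when all partials of $E$ other than $\partial_t E$ vanish at $P$, and then smoothness of $X$ forces $\partial_t E(P)\neq 0$. Consequently every point of $\operatorname{Sing}(F_1)$ is a singular point of its own fibre conic, so $\operatorname{Sing}(F_1)$ lies over $\ell\cap\Delta$. An irreducible curve $B\subset\operatorname{Sing}(F_1)$ is therefore either vertical, i.e. contained in a single fibre $C_{s_0}$ — which must then be a non--reduced double line — or horizontal, i.e. $\pi(B)=\ell$, which forces $\ell\subseteq\Delta$.

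To exclude the vertical case I would note that a double line can occur only when $q$ is a perfect square, i.e. $f_1f_2-f_3^2/4=0$ on $\ell$; restricting $\partial_s E$ to such a line $C_{s_0}$ gives, unless $g_1|_\ell\equiv 0$ (the case $g_1|_\ell\equiv0$ being exactly the horizontal situation treated below), a non--constant function, so $F_1$ is singular at only finitely many points of $C_{s_0}$ and $B$ cannot be vertical. In the horizontal case $B$ is the section $s\mapsto(\text{node of }C_s)$, and the decisive point is that this node lies on $D$. If it did not — equivalently if $q$ had distinct roots, $f_1f_2-f_3^2/4\neq0$ — then after the constant coordinate change taking $q$ to $u_1u_2$ the node is $[1:-g_2:-g_1]$, and substituting it into $\partial_t E$ yields a binary form of degree two in $[s_0:s_1]$. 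Since $X$ is smooth, $\partial_t E$ cannot vanish at any node, i.e. this form has no zero on $\ell\cong\mathbb{P}^1$; but a nonzero section of $\oo_{\mathbb{P}^1}(2)$ must vanish somewhere, and an identically zero one would make $X$ singular along all of $B$ — a contradiction either way. Hence the node lies on $D$, and $q$ is forced to be a perfect square, its double root being a fixed point $v\in D$.

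Finally, once $q$ is a perfect square, every fibre conic over $\ell$ is a pair of lines through the single point $v\in D$, so $F_1$ is singular exactly along the section $B=\{v\}\times\ell$; since $F_1\cap D=\{u_0=0,\ q=0\}=2B$ as a scheme, its support is $B$, which gives $B=(F_1\cap D)_{\mathrm{red}}$ and completes the argument. I expect the main difficulty to be precisely this smoothness bookkeeping: the proof that no curve of singularities can escape $D$ reduces to the observation that a suitable restriction of the normal derivative $\partial_t E$ is a positive--degree binary form on $\mathbb{P}^1$ and hence must have a zero, and one still has to verify that the remaining degenerate coincidences (two double--line fibres colliding, or $f_1\equiv 0$ on $\ell$) again force $X$ to be singular rather than producing a singular curve off $D$.
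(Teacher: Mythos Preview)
Your approach is genuinely different from the paper's: you work directly with the defining equation and partial derivatives, whereas the paper argues intersection-theoretically. The paper first excludes the vertical case by invoking Lemma~\ref{Lemm1} (as in the proof of Lemma~\ref{Lemm2}), then in the horizontal case uses the splitting $\Delta=\ell\cup\Delta_2$ with $\Delta_2$ of type $(2,4)$; the two multiple fibres at $\ell\cap\Delta_2$ force $F_1\cap D$ (which is numerically $2L_1$) to be actually non-reduced, because two distinct rulings of $D$ are disjoint and so could not both pass through the single point where a reduced multiple fibre meets $D$. From $F_1\cap D=2L_1$ the paper then reads off that the node of every fibre conic lies on $L_1$.

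Your horizontal argument---that if $q(t_0)$ has distinct roots then $\partial_t E$ evaluated along the node section is a section of $\mathcal{O}_{\mathbb{P}^1_s}(2)$, hence must vanish somewhere, forcing $X$ singular---is correct and rather clean. One does have to check the chart transition: with $E_B=E_A/s^2$ on the overlap, $\partial_t E|_B$ indeed transforms as a section of $\mathcal{O}(2)$, so the degree count is honest. This replaces the paper's use of multiple fibres by a direct smoothness obstruction, which is a nice trade.

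There is, however, a genuine gap in your vertical case. You claim that $\partial_s E$ restricted to the double line $B$ is non-constant (``unless $g_1|_\ell\equiv 0$''), but this is not right: one has $\partial_s E=u_0\bigl(\partial_s g_1\,u_1+\partial_s g_2\,u_2+\partial_s h\,u_0\bigr)$, and this vanishes identically on the double line $\{au_1+bu_2+cu_0=0\}$ whenever $(\partial_s g_1,\partial_s g_2,\partial_s h)$ is proportional to $(a,b,c)$ at $(s_0,t_0)$---a codimension-two coincidence having nothing to do with $g_1|_\ell\equiv 0$. The fix is immediate and in the spirit of your own horizontal argument: use $\partial_t E$ instead. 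Since $\partial_t E$ is a quadratic form in $u_0,u_1,u_2$, its restriction to the line $B\subset\mathbb{P}^2$ is a section of $\mathcal{O}_{\mathbb{P}^1}(2)$, which must vanish somewhere (or be identically zero); either way $X$ acquires a singular point on $B$, a contradiction. Alternatively, just quote Lemma~\ref{Lemm1} as the paper does.

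Finally, your worry about $q(t_0)\equiv 0$ is easily dispatched: that would make $(t_1 t_0'-t_0 t_1')$ a factor of $q$, so $D=\{u_0=0,\;q=0\}$ would be reducible, contrary to the standing irreducibility hypothesis on $X\cap Y$.
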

\begin{proof}
It is enough to prove that $F_1$ is smooth at codimension $1$ or the curve of singularities on $F_1$ is $F_1\cap D$.
Assume that there exists a fiber $F_1$ of $\pi_1$ such that $F_1$ has a curve of singularities $B$.
Arguing as in the proof of Lemma Lemma \ref{Lemm2} we have that $B\cdot F_2>0$.


Let $\Delta$ be the discriminant curve on $\pp^1\times\pp^1$. Note that $\Delta=\Delta_1\cup\Delta_2$, where $\Delta_1$ is a divisor of type $(0,1)$, $\Delta_1$ is a divisor of type $(2,4)$. Since $\Delta_1\cdot\Delta_2=2$, we see that $\pi$ has two multiple fibers on $F_1$, say $C_1$, $C_2$. Note that $C_i=2C'_i$. Then $C'_i\cdot D=1$, i.e. $C_i$ meets $D$ in one point. Since $F_1|_D \sim 2L_1$, we see that $F_1\cap D$ consists of an irreducible non-reduced curve. Arguing as in the proof of Lemma \ref{Lemm2}, we have that every general fiber $C$ of $\pi$ on $F_1$ consists of two lines $l_1, l_2$. Since $C\cdot D=2$, we see that $l_1\cap L_1=l_2\cap L_1$ consists of one point. Then $L_1$ is the curve of singularities of $F_1$.
\end{proof}

\begin{lemma}
\label{Lemm3}
Let $F_1,F_2$ be fibers of $\pi_1$ and $\pi_2$. Assume that $F_1$ is normal. Then $F_1,F_2$ are Gorenstein del Pezzo surfaces with at worst du Val singularities.
\end{lemma}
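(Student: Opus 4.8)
The plan is to establish the three assertions in turn: that each $F_i$ is Gorenstein, that $-K_{F_i}$ is ample (so each is a del Pezzo surface), and finally that the singularities are du Val, with the last being the only nonformal point.

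First I would handle the Gorenstein del Pezzo part by adjunction. Each $F_i=\pi_i^*(\mathrm{pt})$ is an effective Cartier divisor on the smooth threefold $X$, hence Cohen--Macaulay with $\omega_{F_i}$ a line bundle, so $F_i$ is Gorenstein. From the intersection table $F_i^2=0$, so the normal bundle $\oo_{F_i}(F_i)$ is trivial and adjunction gives $K_{F_i}=(K_X+F_i)|_{F_i}=K_X|_{F_i}$. Since $-K_X$ is ample, $-K_{F_i}=(-K_X)|_{F_i}$ is ample. Together with normality (for $F_2$ by Lemma~\ref{Lemm2}, for $F_1$ by hypothesis; normal connected fibers are irreducible) this shows $F_1$ and $F_2$ are normal Gorenstein del Pezzo surfaces of degrees $6$ and $3$.

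The remaining task is to prove the singularities are du Val, and here the subtlety is that being a normal Gorenstein del Pezzo surface does not suffice: the projective cone over a smooth plane cubic is a normal cubic surface with $-K$ ample whose vertex is a non-du-Val simple elliptic singularity, and such a configuration can even occur locally as a fiber of a smooth threefold over a curve. To exclude it I would use that $\pi$ restricts to a conic bundle $\pi|_{F_i}\colon F_i\to \mathbb{P}^1$ over a ruling of $\mathbb{P}^1\times\mathbb{P}^1$, whose general fiber is a smooth conic. By Tsen's theorem the generic fiber has a rational point, so $F_i$ is a rational surface and its minimal resolution $\widetilde{F}_i$ is rational, with $q(\widetilde{F}_i)=p_g(\widetilde{F}_i)=0$.

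Finally I would rule out every non-du-Val (equivalently, minimally elliptic, hence non-rational) Gorenstein singularity by a numerical incompatibility with this rationality. If $\phi\colon \widetilde{F}_i\to F_i$ contracted an elliptic configuration $E$ with $E^2=-r$ to such a point, then on one hand $K_{\widetilde{F}_i}^2=d-r$ with $d\in\{6,3\}$, while on the other hand, tracing $E$ down to a minimal model ($\mathbb{P}^2$ or a Hirzebruch surface) of the rational surface $\widetilde{F}_i$ forces $K_{\widetilde{F}_i}^2=-r-m$ with $m\ge 0$ counting the blow-ups away from $E$; since $d>0$ these cannot agree. This contradiction (or, equivalently, the Hidaka--Watanabe classification \cite{HW81}, in which all non-du-Val Gorenstein del Pezzo surfaces are irrational) forces the singularities to be du Val. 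I expect this last step, the exclusion of the elliptic Gorenstein singularities such as the cubic cone, to be the main obstacle; the Gorenstein and ampleness statements are formal.
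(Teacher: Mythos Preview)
Your proof is correct and follows essentially the same route as the paper's: adjunction gives the Gorenstein del Pezzo structure, Lemma~\ref{Lemm2} and the hypothesis give normality, and the conic-bundle structure over $\mathbb{P}^1$ forces rationality, which via \cite{HW81} excludes the simple elliptic (non-du-Val) case. The paper simply invokes \cite{HW81} for the dichotomy ``du Val or simple elliptic and irrational'', whereas you additionally sketch a direct numerical argument; both arrive at the same contradiction, and your parenthetical citation of \cite{HW81} already suffices.
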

\begin{proof}
We have that $F_1$ is normal by assumption, and $F_2$ is normal by Lemma \ref{Lemm2}. The fact that $F_i$ are Gorenstein del Pezzo surfaces follows from adjunction formula on $X$. From \cite{HW81} it follows that $F_i$ have at worst log canonical singularities, and if $F_i$ is strictly log canonical, then it has a simple elliptic singularity and $F_i$ is non-rational.
However, since $F_i$ has a structure of a conic bundle over a rational curve, we conclude that $F_i$ is rational. Thus, $F_i$ has at worst du Val singularities.
\end{proof}

\begin{lemma}
\label{lem-F1-smooth}
Assume that $P\not\in D$ and $P$ is a singular point of $F_2$.
Assume that the fiber of $\pi$ that contains $P$ is not multiple. Then $F_1$ is smooth where $F_1$ is a fiber of $\pi_1$ that contains $P$.
\end{lemma}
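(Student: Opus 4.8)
The plan is to combine Lemma~\ref{Lemm1} with a local analysis of the conic bundle $\pi$ along the discriminant curve $\Delta\subset\pp^1\times\pp^1$. Write $(a,b)=\pi(P)$, so that $F_2=\pi^{-1}(B_2)$ and $F_1=\pi^{-1}(B_1)$ with $B_2=\pp^1\times\{b\}$ of type $(1,0)$ and $B_1=\{a\}\times\pp^1$ of type $(0,1)$; the fiber $C=\pi^{-1}(a,b)$ is the common fiber $F_1\cap F_2$ through $P$. By Lemma~\ref{Lemm1}, since $F_2$ is singular at $P$ the surface $F_1$ is already smooth at $P$, so the remaining task is to control the singularities of $F_1$ at the other points of $B_1\cap\Delta$.

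First I would pin down the local picture at $(a,b)$. Since $C$ is not a multiple fiber it is not a double line, so the symmetric $3\times3$ matrix $M$ of the conic $\pi$ has rank $2$ over $(a,b)$; equivalently $(a,b)$ is a \emph{smooth} point of $\Delta$, as multiple fibers of $\pi$ correspond precisely to the singular points of $\Delta$. Because $P\notin D$, near $P$ the conic bundle is given by the standard normal form $\{xy=g\}$, where $g$ is a local equation of $\Delta$ and the bisection $D$ is disjoint from this chart. Restricting to a smooth curve $B$ through $(a,b)$, the surface $\pi^{-1}(B)$ then acquires an $A_{m-1}$ singularity at the node of its degenerate fiber, where $m=(B\cdot\Delta)_{(a,b)}$. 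Applying this to $B_2$: as $F_2$ is singular at $P$, we must have $m=(B_2\cdot\Delta)_{(a,b)}\geq2$, i.e.\ $B_2$ is \emph{tangent} to $\Delta$ at $(a,b)$, and $P$ is the node of $C$.

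The key consequence is that the tangent line to $\Delta$ at the smooth point $(a,b)$ is the horizontal direction (the tangent of $B_2$), hence is transverse to the vertical line $B_1$. Therefore $B_1\not\subset\Delta$ (which also rules out the non-normal case of Lemma~\ref{Lemm2a}) and $(B_1\cdot\Delta)_{(a,b)}=1$. Next I would invoke $B_1\cdot\Delta=2$: this follows either from the bidegree $\Delta\sim(2,5)$, obtained as the determinant of $M$ from the weights recorded in Section~\ref{sec-prelim}, or simply from the fact that a smooth sextic del Pezzo surface has exactly two reducible fibers for this conic bundle. Since $(a,b)$ already accounts for one unit of intersection, the remaining intersection of $B_1$ with $\Delta$ is a single point $(a,b')$ of multiplicity $1$, hence transverse and a smooth point of $\Delta$ (a line through a singular point of $\Delta$, i.e.\ a multiple fiber, would meet $\Delta$ there with multiplicity $\geq2$). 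Thus $\pi|_{F_1}$ has only two reducible fibers, both reduced pairs of lines, and no multiple fibers; by the same normal form $F_1$ is smooth over every point of $B_1$, so $F_1$ is smooth.

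The main obstacle is justifying the normal form $\{xy=g\}$ and the resulting dictionary ``local intersection multiplicity $m$ of the base curve with $\Delta$ $\leftrightarrow$ singularity of type $A_{m-1}$'' precisely at $P$. This is exactly where the two hypotheses enter: $P\notin D$ guarantees that the section $D$ does not interfere with the standard conic-bundle chart, while the assumption that $C$ is not multiple guarantees that $M$ has rank $2$ and that $(a,b)$ is a smooth point of $\Delta$, so that the tangency of $B_2$ forces transversality of $B_1$.
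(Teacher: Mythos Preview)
Your proof is correct, and it takes a genuinely different route from the paper's.

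The paper argues as follows: using Lemma~\ref{Lemm1} and the proof of Lemma~\ref{Lemm2a} it first shows $F_1$ is normal, hence (Lemma~\ref{Lemm3}) a du Val sextic del Pezzo surface. It then observes that $D\cap F_1=E_1\cup E_2$ consists of two disjoint $(-1)$-curves (from $D|_{F_1}^2=-2$, $-K_{F_1}\cdot D|_{F_1}=2$), and invokes the classification of Section~\ref{subsect-sextic}: the only singular case admitting two disjoint $(-1)$-curves is case~(2), a single $A_1$ point with four $(-1)$-curves. In that case the reducible fiber $C=E_3+E_4$ has its node at the $A_1$ point, so $P$ would be singular on $F_1$, contradicting Lemma~\ref{Lemm1}.

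Your approach bypasses the classification entirely and works directly with the discriminant $\Delta$: the normal form $\{xy=g\}$ at a rank-$2$ node converts ``$F_i$ singular at the node'' into ``$B_i$ tangent to $\Delta$'', and then the global constraint $B_1\cdot\Delta=2$ forces both intersections of $B_1$ with $\Delta$ to be simple and transverse, whence $F_1$ is smooth. This is more elementary and makes the role of the hypothesis ``$C$ not multiple'' completely transparent (it is exactly what guarantees $(a,b)\in\Delta$ is a smooth point, so that tangency of $B_2$ forces transversality of $B_1$). One small remark: the hypothesis $P\notin D$ is not actually needed for your normal-form step---the form $\{xy=g\}$ at the node of a rank-$2$ fiber follows just from smoothness of $X$---so your argument in fact proves a slightly stronger statement than the one recorded in the lemma.
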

\begin{proof}
By Lemma \ref{Lemm1} we see that $F_1$ is smooth at $P$. Note that the fiber of $\pi$ that contains $P$ consists of two curves. Since the intersection points of these curves is a smooth point of $F_1$, by the proof of Lemma \ref{Lemm2a} we see that $F_1$ is normal. Hence by Lemma \ref{Lemm3} we have that $F_1$ is a Gorenstein del Pezzo surface with at worst du Val singularities.

Note that $D\cap F_1=E_1\cup E_2$ where $E_1,E_2$ are disjoint $(-1)$-curves. Indeed, this follows from the equations $D|_{F_1}^2=-2$ and $-K_{F_1}\cdot D|_{F_1} = 2$.
From the classification of sextic du Val del Pezzo surfaces as in section \ref{subsect-sextic} it follows that the only possible case when $F_1$ is singular is when there are four $(-1)$-curves on $F_1$ and one singular point of type $A_1$. 
Let $C$ be a fiber of $\pi\colon X\to \mathbb{P}^1\times\mathbb{P}^1$ that passes through $P$. Since $P\in F_2$ is a singular point, the fiber of the conic bundle $\pi|_{F_2}\colon F_2\to \mathbb{P}^1$ is singular. Since by assumption it is not multiple, we have that $C$ is reducible. Then on $F_1$ we have $C=E_3+E_4$, and $P=E_3\cap E_4$ which is a singular point on $F_1$. However, this contradicts to Lemma \ref{Lemm1}. This shows that $F_1$ is smooth.
\end{proof}

\section{K-stability and Abban-Zhuang theory}
We briefly recall some of the definitions in the theory of K-stability. For more details, see a survey \cite{Xu21} and references therein.

\subsection{Discrepancies and thresholds}
Let $X$ be a Fano variety, and let $f\colon Y\to X$ be a proper birational morphism from a normal variety $Y$. For a prime divisor $E$ on $Y$, a \emph{log discrepancy} of $E$ with respect to $X$ is defined as
\[
A_{X}(E) = 1 + \mathrm{coeff}_E ( K_Y - f^*(K_X)).
\]
Put $L=-K_X$. By a \emph{pseudo-effective threshold} of $E$ with respect to a Fano $X$ we mean the number
\[
\tau_{X}(E) = \sup\{ x\in\mathbb{R}_{\geq0}\colon f^*L - xE\ \text{is pseudo-effective}\}.
\]
Similarly, we define the \emph{nef threshold} of $E$ with respect to a Fano $X$:
\[
\epsilon_{X}(E) = \sup\{ x\in\mathbb{R}_{\geq0}\colon f^*L - xE\ \text{is nef}\}.
\]
The \emph{expected vanishing order} of $E$ with respect to a Fano $X$ is
\[
S_{X}(E) = \frac{1}{\mathrm{vol}(L)} \int_{0}^{\infty}{\mathrm{vol}( f^*L - xE )dx},
\]
where $\mathrm{vol}$ is the volume function, see \cite{Laz04}. The \emph{beta-invariant} $\beta_{X}(E)$ of $E$ with respect to a Fano $X$ is defined as follows:
\[
\beta_{X}(E) = A_{X}(E) - S_{X}(E).
\]
Recall that the $\delta$-invariant of $E$ with respect to a Fano $X$ (resp., $\delta$-invariant of $E$ along $Z$ with respect to a Fano $X$) are defined as
\[
\delta(X) = \inf_{E/X} \frac{A_{X}(E)}{S_{X}(E)}, \ \ \ \ \delta_Z(X) = \inf_{E/X,\ Z\subset C(E)} \frac{A_{X}(E)}{S_{X}(E)}
\]
where $E$ runs through all prime divisors over $X$ (resp., $E$ runs through all prime divisors over $X$ whose center contains $Z$).

\begin{definition}[\cite{Li17}, \cite{Fu19}, \cite{Fu16}]
\label{def-k-stability}
A klt Fano $X$ is called
\begin{enumerate}
\item
\emph{divisorially semistable} (resp., \emph{divisorially stable}), if $\beta_{X}(E)\geq0$ (resp., $\beta_{X}(E)>0$) for any prime divisor $E$ on $X$. We say that $X$ is \emph{divisorially unstable} if it is not divisorially semistable,
\item
\emph{K-semistable} (resp., \emph{K-stable}) if $\beta_{X}(E)\geq0$ (resp., $\beta_{X}(E)>0$) for any prime divisor $E$ over $X$. We say that $X$ is \emph{K-unstable} if it is not K-semistable.
\end{enumerate}
\end{definition}


Now, we recall two propositions from Abban-Zhuang theory developed in \cite{AZ20}.

\begin{proposition}[{\cite[Corollary 1.7.26]{ChAll}}]
\label{Van1}
Let $X$ be a smooth Fano threefold, $Y\subset X$ be an irreducible normal surface that has at most du Val singularities, $Z\subset Y$ be an irreducible smooth curve. Then for any prime divisor $E$ over $X$ such that $C(E)=Z$ we have
\begin{equation}
\label{ineq-curve}
\frac{A_X(E)}{S_X(E)}\geq \min\left\{ \frac{1}{S_X(Y)}, \frac{1}{S(W^{Y,Z}_{\bullet,\bullet};Z)} \right\},
\end{equation}
where
\begin{multline*}
S(W^{Y,Z}_{\bullet,\bullet};Z)=\frac{3}{(-K_X)^3}\int\limits_0^{\infty}(P(u)^2\cdot Y)\cdot\ord_Z(N(u)|_Y)du
+\frac{3}{(-K_X)^3}\int\limits_0^{\infty}\int\limits_0^{\infty}\vol(P(u)|_Y-vZ)dv du.
\end{multline*}
Moreover, if the equality holds in \eqref{ineq-curve}, then $\frac{A_X(E)}{S_X(E)} = \frac{1}{S_X(Y)}$.
\end{proposition}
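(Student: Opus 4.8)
The statement is an instance of the Abban--Zhuang refinement of the $\delta$-invariant applied to the flag $Z \subset Y \subset X$, so the plan is to run that machinery in the two-step form adapted to a curve inside a surface inside a threefold. Because $Y$ is itself a prime divisor on the smooth threefold $X$, I would begin with the Zariski decomposition
\[
-K_X - uY = P(u) + N(u), \qquad u \in [0, \tau_X(Y)],
\]
into its positive (nef) and negative parts, and record the restricted data $P(u)|_Y$ and $N(u)|_Y$ on the surface $Y$. These assemble into the refined multigraded linear series $W^Y_{\bullet,\bullet}$ on $Y$, the positive part governing the movable sections and the negative part contributing a fixed locus; this is the object on which the second refinement, by the curve $Z$, is performed.

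The core is the Abban--Zhuang inequality applied twice. The first application, refining the anticanonical series by the divisor $Y$, yields for every prime divisor $E$ over $X$
\[
\frac{A_X(E)}{S_X(E)} \ge \min\left\{ \frac{A_X(Y)}{S_X(Y)}, \ \frac{A_X(E)}{S(W^Y_{\bullet,\bullet}; E)} \right\},
\]
where the first term reflects the possibility that the center of $E$ is $Y$ and the second isolates the contribution from deeper centers; since $Y$ is a prime divisor, $A_X(Y) = 1$. The second application refines $W^Y_{\bullet,\bullet}$ by the smooth curve $Z$, which is a prime divisor on the surface $Y$. As $C(E) = Z$ by hypothesis, the center of $E$ on $Y$ is exactly $Z$, so the refinement produces the $Z$-term directly:
\[
\frac{A_X(E)}{S(W^Y_{\bullet,\bullet}; E)} \ge \frac{A_Y(Z)}{S(W^{Y,Z}_{\bullet,\bullet}; Z)}.
\]
Combining the two displays and using $A_Y(Z) = 1$ gives the claimed bound \eqref{ineq-curve}.

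Two points need care. First, the reductions $A_X(Y) = 1$ and $A_Y(Z) = 1$, as well as the passage from $A_X$ to $A_Y$ in the intermediate estimate, rely on adjunction being clean: since $Y$ has at worst du Val (hence canonical, Gorenstein) singularities, $(X,Y)$ is plt along $Y$ and the different vanishes in codimension one, so $K_Y = (K_X + Y)|_Y$; moreover $\operatorname{Sing}(Y)$ is finite, so the smooth curve $Z$ is not contained in it and has log discrepancy one on $Y$. Second, unwinding the definition of $S(W^{Y,Z}_{\bullet,\bullet}; Z)$ recovers exactly the two integrals in the statement: the term $\int_0^\infty \vol(P(u)|_Y - vZ)\,dv$ comes from refining the movable part $P(u)|_Y$ by $Z$, while $(P(u)^2 \cdot Y)\cdot \ord_Z(N(u)|_Y)$ records the forced vanishing of the negative part $N(u)|_Y$ along $Z$, and the normalizing constant $3/(-K_X)^3$ is the dimensional factor from the multigraded Okounkov-body computation.

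For the ``moreover'' clause I would track the equality conditions through both refinement steps: the Abban--Zhuang equality analysis shows that equality in the first inequality forces the worst valuation to be $\operatorname{ord}_Y$ itself, so that global equality in \eqref{ineq-curve} is realized only by the divisorial term, whence $A_X(E)/S_X(E) = A_X(Y)/S_X(Y) = 1/S_X(Y)$. The main obstacle is the justification of the two Abban--Zhuang inequalities themselves, namely the estimate bounding $S_X(E)$ from above by the refined expected-vanishing invariants; this is the substantive content of \cite{AZ20} and requires the filtration/Okounkov-body formalism together with a careful treatment of how the negative part $N(u)|_Y$ enters the refined series over $Z$. Verifying that the du Val hypothesis keeps all adjunction corrections trivial, and that $\vol(P(u)|_Y - vZ)$ is finite and integrable over the relevant range, are the remaining technical checks.
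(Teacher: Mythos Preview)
Your outline is a faithful sketch of the Abban--Zhuang two-step refinement argument that underlies Corollary~1.7.26 of \cite{ChAll}, and the technical points you flag (plt adjunction along $Y$, vanishing different, $A_Y(Z)=1$ from smoothness of $Z$ outside $\operatorname{Sing}(Y)$, identification of the integral formula) are the right ones. Note, however, that the present paper does not give its own proof of this proposition: it is quoted verbatim from \cite{ChAll} and used as a black box, so there is no ``paper's proof'' to compare against beyond the citation itself.
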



Let $P(u,v)$ be the positive part of the Zariski decomposition of $P(u)|_Y-vZ$, and $N(u,v)$ be the negative
part of the Zariski decomposition of this divisor. We can write $N(u)|_Y=dZ+N'_Y(u)$, where $Z\not\subset
\mathrm{Supp}(N'_Y(u))$ and $d=d(u)=\ord_Z(N(u)|_Y)$.

\begin{proposition}[{\cite[Theorem 1.7.30]{ChAll}}]
\label{Van2}
Let $X$ be a smooth Fano threefold, $Y\subset X$ be an irreducible normal surface that has at most du Val singularities, $Z\subset Y$ be an irreducible smooth curve such that the log pair $(Y,Z)$ has purely log
terminal singularities. Let $P$ be a point in the curve $Z$. Then
\begin{equation}
\delta_P(X)\geq\min\left\{\frac{1-\ord_P(\Delta_Z)}{S(W^{Y,Z}_{\bullet,\bullet,\bullet};P)},\frac{1}{S(V^Y_{\bullet,\bullet};Z)},\frac{1}{S_X(Y)}\right\},
\end{equation}
where $\Delta_Z$ is the different of the log pair $(Y,Z)$, and
\begin{multline*}
S(W^{Y,Z}_{\bullet,\bullet,\bullet};P)=\frac{3}{(-K_X)^3}\int\limits_0^{\infty}\int\limits_0^{\infty}(P(u,v)\cdot Z)^2dv du+\\
+\frac{6}{(-K_X)^3}\int\limits_0^{\infty}\int\limits_0^{\infty}(P(u,v)\cdot Z)\cdot\ord_P(N'_Y(u)|_Z+N(u,v)|_Z)dv du.
\end{multline*}
Moreover, if the inequality is an equality and there exists a prime divisor $E$ over
the threefold such that $C_X(E)=P$ and $\delta_P(X)=\frac{A(E)}{S(E)}$ then $\delta_P(X)=\frac{1}{S_X(Y)}$.
\end{proposition}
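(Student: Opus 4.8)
This statement is the instance of the Abban--Zhuang refinement method of \cite{AZ20} attached to the complete flag $P\in Z\subset Y\subset X$, so the plan is to reconstruct that machinery rather than merely cite it. By the valuative (Fujita--Li) criterion for the local invariant, $\delta_P(X)=\inf_E A_X(E)/S_X(E)$, the infimum running over all prime divisors $E$ over $X$ whose center $C_X(E)$ contains $P$; hence it suffices to produce a lower bound of the asserted shape that is uniform in $E$. I would organise the argument by the dimension of $C_X(E)$, peeling off one step of the flag at a time.

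For the surface step, filter the section ring $\bigoplus_{m\geq0}H^0(X,-mK_X)$ by order of vanishing along $Y$. Writing $P(u),N(u)$ for the Zariski decomposition of $-K_X-uY$, this produces on $Y$ a refined multigraded linear series $W^{Y}_{\bullet,\bullet}$ whose expected vanishing order recovers $S_X(Y)$. The main theorem of \cite{AZ20} then gives $\delta_P(X)\geq\min\{1/S_X(Y),\ \delta_P(Y;W^{Y}_{\bullet,\bullet})\}$, the first term controlling divisors with $C_X(E)=Y$ and the second those with center strictly inside $Y$.

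Next I would iterate the refinement one dimension lower, on $Y$ along $Z$, using the Zariski decomposition $P(u)|_Y-vZ=P(u,v)+N(u,v)$ and the splitting $N(u)|_Y=d(u)Z+N'_Y(u)$. This yields $\delta_P(Y;W^{Y}_{\bullet,\bullet})\geq\min\{1/S(V^{Y}_{\bullet,\bullet};Z),\ \delta_P(Z;W^{Y,Z}_{\bullet,\bullet,\bullet})\}$, where the curve term is the integral appearing in Proposition~\ref{Van1} and the point term is governed by the refined series $W^{Y,Z}_{\bullet,\bullet,\bullet}$ on $Z$. The essential input is the final descent to $P$: since $(Y,Z)$ is plt, inversion of adjunction produces the log pair $(Z,\Delta_Z)$ with different $\Delta_Z$, and the log discrepancy over $X$ of a divisor centered at $P$ is bounded below by the log discrepancy computed on $(Z,\Delta_Z)$, which contributes the numerator factor $1-\ord_P(\Delta_Z)$. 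Expanding the expected vanishing order of $W^{Y,Z}_{\bullet,\bullet,\bullet}$ at $P$ across its two gradings then produces exactly the two double integrals in the displayed formula: the term $(P(u,v)\cdot Z)^2$ from the mobile part, and the cross term involving $\ord_P(N'_Y(u)|_Z+N(u,v)|_Z)$ from the negative parts restricted to $Z$.

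The main obstacle, I expect, is the adjunction step on the singular surface: one must check that the du Val singularities of $Y$ together with the plt hypothesis let the refinement descend cleanly to the smooth curve $Z$ with the different as the only correction, and that the Zariski decompositions $P(u,v),N(u,v)$ vary piecewise-polynomially in $(u,v)$ so that the $S$-invariants are genuinely the stated convergent integrals rather than formal expressions. Finally, the \emph{moreover} clause is a rigidity statement built into the refinement: the Abban--Zhuang estimate carries an ``equality forces the first step'' alternative, so if the displayed minimum is attained and is realised by an honest divisor $E$ with $C_X(E)=P$, then strict inequality in either of the latter two terms would be inherited by $A_X(E)/S_X(E)$, a contradiction; hence the binding term must be $1/S_X(Y)$ and $\delta_P(X)=1/S_X(Y)$.
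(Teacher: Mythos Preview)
The paper does not give its own proof of this proposition; it is stated as a direct citation of \cite[Theorem~1.7.30]{ChAll} and used as a black box throughout Sections~\ref{sec-divisorial-stability}--\ref{sec-center-point}. So there is no in-paper argument to compare against.

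That said, your sketch is an accurate reconstruction of the Abban--Zhuang machinery that underlies the cited theorem. The two-step refinement along the flag $P\in Z\subset Y\subset X$, the appearance of the different $\Delta_Z$ via plt adjunction on the possibly singular surface $Y$, and the decomposition of $S(W^{Y,Z}_{\bullet,\bullet,\bullet};P)$ into the ``mobile'' square term and the cross term coming from the negative parts are all correctly identified. Your remark about piecewise-polynomiality of the Zariski decompositions is exactly the finiteness input needed to make the integrals honest.

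One small point: your argument for the \emph{moreover} clause is slightly garbled. The mechanism is not that ``strict inequality in either of the latter two terms would be inherited'' but rather that the first Abban--Zhuang refinement step (along $Y$) already carries the rigidity: for any $E$ with $C_X(E)\subsetneq Y$, the inequality $A_X(E)/S_X(E)\geq \delta_P(Y;W^{Y}_{\bullet,\bullet})$ is strict unless the minimum equals $1/S_X(Y)$. Hence if an actual minimiser $E$ has $C_X(E)=P$ and the global inequality is an equality, only the alternative $\delta_P(X)=1/S_X(Y)$ can survive. This is the content of the equality clause in \cite[Theorem~3.2]{AZ20}.
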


\begin{proposition}[{\cite[Theorem 1.7.30]{ChAll}},{\cite[Remark 1.7.32]{ChAll}}]
\label{Van3}
Let $X$ be a smooth Fano threefold, $Y\subset X$ be an irreducible normal surface that has at most du Val singularities, let $Q\in Y$ be a point in $Y$. $\epsilon\colon\tilde{Y}\rightarrow Y$ be the plt blowup of the point $Q$, and let $\tilde{Z}$ be the $\epsilon$-exceptional curve. Then
\begin{equation}
\delta_Q(X)\geq\min\left\{\min\limits_{P\in\tilde{Z}}\frac{1-\ord_P(\Delta_{\tilde{Z}})}{S(W^{Y,\tilde{Z}}_{\bullet,\bullet,\bullet};P)},\frac{A_Y(\tilde{Z})}{S(V^Y_{\bullet,\bullet};\tilde{Z})},\frac{1}{S_X(Y)}\right\},
\end{equation}
where $\Delta_{\tilde{Z}}$ is the different of the log pair $(\widetilde{Y},\tilde{Z})$.
\end{proposition}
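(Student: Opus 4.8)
The plan is to obtain this as an instance of the general Abban--Zhuang flag estimate underlying Proposition~\ref{Van2}, applied not to a curve inside $Y$ but to the exceptional curve $\tilde{Z}$ of the plt blowup $\epsilon\colon\tilde{Y}\to Y$. The point is that when one wants to probe the local $\delta$-invariant at $Q\in Y$ and there is no convenient smooth curve on $Y$ passing through $Q$ (for instance when $Q$ is a singular point of $Y$), the plt blowup manufactures such a curve on the normal surface $\tilde{Y}$. First I would recall the first stage of refinement: refining $-K_X$ along $Y$ already gives $\delta_Q(X)\geq\min\{1/S_X(Y),\ \delta_Q(\text{refined series on }Y)\}$, which is the origin of the term $1/S_X(Y)$. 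It remains to bound the refined $\delta$-invariant at $Q$ of the multigraded linear series cut out on $Y$.

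Next I would carry out the two further refinements on $\tilde{Y}$. Since $\epsilon$ is a birational morphism of normal surfaces, the refined series on $Y$ pulls back to $\tilde{Y}$, and on $\tilde{Y}$ the curve $\tilde{Z}=\epsilon^{-1}(Q)$ is a genuine smooth irreducible curve. Refining by $\tilde{Z}$ produces the expected vanishing order $S(V^Y_{\bullet,\bullet};\tilde{Z})$, but now the numerator of the corresponding bound is the log discrepancy $A_Y(\tilde{Z})=1+\coeff_{\tilde{Z}}(K_{\tilde{Y}}-\epsilon^*K_Y)$ rather than the $1$ appearing in Proposition~\ref{Van2}, precisely because $\tilde{Z}$ is an exceptional divisor \emph{over} $Y$ and not a divisor on $Y$ itself; the plt hypothesis guarantees $A_Y(\tilde{Z})>0$ and that $(\tilde{Y},\tilde{Z})$ is plt, so that adjunction is available. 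This yields the middle term $A_Y(\tilde{Z})/S(V^Y_{\bullet,\bullet};\tilde{Z})$, which controls the valuations whose center on $\tilde{Y}$ is $\tilde{Z}$. A final refinement by a point $P\in\tilde{Z}$, using the adjunction formula $(K_{\tilde{Y}}+\tilde{Z})|_{\tilde{Z}}=K_{\tilde{Z}}+\Delta_{\tilde{Z}}$ which introduces the different $\Delta_{\tilde{Z}}$, gives the first term $(1-\ord_P(\Delta_{\tilde{Z}}))/S(W^{Y,\tilde{Z}}_{\bullet,\bullet,\bullet};P)$, controlling the valuations centered at $P$. Taking the minimum over the three stages, and the infimum over $P\in\tilde{Z}$ at the last stage, produces the asserted inequality.

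I expect the main obstacle to be bookkeeping rather than conceptual: one must verify that the Zariski decompositions and volume integrals defining $S(V^Y_{\bullet,\bullet};\tilde{Z})$ and $S(W^{Y,\tilde{Z}}_{\bullet,\bullet,\bullet};P)$ are indeed the correct $\epsilon$-pullbacks of the refined series, and that the discrepancy $A_Y(\tilde{Z})$ and the different $\Delta_{\tilde{Z}}$ enter with exactly the right coefficients. All of these quantities, together with the whole inequality, are packaged as an instance of the general theorem of \cite{AZ20} in the form of \cite[Theorem 1.7.30, Remark 1.7.32]{ChAll}; in practice the proof therefore reduces to invoking that statement with the flag determined by the plt blowup, the essential content being the verification that $\epsilon\colon\tilde{Y}\to Y$ furnishes an admissible flag $P\in\tilde{Z}\subset\tilde{Y}$.
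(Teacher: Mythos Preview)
The paper does not supply its own proof of this proposition; it is quoted directly from \cite[Theorem 1.7.30, Remark 1.7.32]{ChAll} and stated without further argument. Your sketch of the Abban--Zhuang two-step refinement via the plt blowup is an accurate account of how the cited result is established, so there is no discrepancy to report---but strictly speaking there is also no proof in the paper to compare against.
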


\subsection{Applications of Abban-Zhuang theory}
\label{subsec-applications-az}
Let $P$ be a point in $X$. To prove that $X$ is K-stable, it is enough to show that $\delta_P(X)>1$.
We can estimate $\delta(P)$ as in \cite[Theorem 3.3]{AZ20} and Proposition \ref{Van3}.
Let $Y$ be a normal irreducible surface in $X$.
Then from \cite{AZ20} and \cite{ChAll} it follows that
\begin{equation}
\label{eq-applications}
\delta_P(X)\geq \min \left\{\frac{1}{S_X(Y)}, \delta_P(Y, W_{\bullet, \bullet}^{Y})\right\}
\end{equation}
for
\[
\delta_P(Y, W_{\bullet, \bullet}^{Y}) = \inf_{E/Y, P\in C_{Y}(E)} \frac{A_{Y}(E)}{S(W_{\bullet, \bullet}^{Y}, E)}
\]
where by Proposition \ref{Van2} one has
\begin{multline*}
S(W^{Y}_{\bullet,\bullet};E)=\frac{3}{(-K_X)^3}\int\limits_0^{\infty}(P(u)^2\cdot S)\cdot\ord_E(N(u)|_Y)du
+\frac{3}{(-K_X)^3}\int\limits_0^{\infty}\int\limits_0^{\infty}\vol(P(u)|_Y-vE)dv du
\end{multline*}
and the infimum is taken over all prime divisors $E$ over $Y$ whose centers on $Y$ contain~$P$.

\section{Divisorial stability}
\label{sec-divisorial-stability}
For the reader's convenience, we prove that $X$ is divisorially stable.
By {\cite[Lemma 9.5, Remark 9.6]{Fu16}} it is enough to consider only the divisors $L$ such that $-K_X-L$ is big. We claim that it is enough to consider only one divisor $L=F_2$. Write $L\sim aD+bF_1+cF_2$ where $a,b,c\geq 0$, because the cone of effective divisors on $X$ is generated by $D,F_1,F_2$. It is clear that $a=0$. Note that $-K_X-L$ is big, then $-K_X-L|_{F_i}$ are big as well for $i=1,2$ where $F_i$. We have $-K_X\sim D+F_1+2F_2$, so $-K_X|_{F_1}\sim D+2F_2|_{F_1}$ and $-K_X|_{F_2}\sim D+F_1|_{F_2}$. This shows that $b=0$ and $c=1$. We start with computing some Zariski decompositions.

\begin{proposition}
\label{ZD2}
Let $-K_X-uF_1=P(u)+N(u)$ be the Zariski decomposition. Then
\[
P(u)=
\begin{cases}D+(1-u)F_1+2F_2,\quad\quad\quad\quad\quad\quad\text{for}\ \ 0\leq u\leq\frac{1}{2}, \\
(2-2u)D+(1-u)F_1+2F_2,\quad\ \quad\, \text{for}\quad \frac{1}{2}\leq u\leq 1.
\end{cases}
\]
\[
N(u)=
\begin{cases}0,\quad\, \quad\quad\quad\quad\quad\text{for}\ \ 0\leq u\leq\frac{1}{2}, \\
(2u-1)D,\quad\ \quad\text{for}\quad \frac{1}{2}\leq u\leq 1.
\end{cases}
\]
\end{proposition}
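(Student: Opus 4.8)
The plan is to compute the Zariski decomposition of $-K_X - uF_1$ by exploiting the explicit intersection theory on $X$ recorded earlier, together with the fact that $\overline{\mathrm{Eff}}(X)$ is generated by $D, F_1, F_2$. Since $-K_X \sim D + F_1 + 2F_2$, we have
\[
-K_X - uF_1 \sim D + (1-u)F_1 + 2F_2.
\]
First I would identify the threshold where this divisor ceases to be nef. The only negative curves to test are the generators of the Mori cone, namely $L_1, L_2 \subset D$ and a general conic-bundle fiber $C$. Using the listed products $D\cdot L_i = -1$, $F_i\cdot L_i = 0$, $F_1\cdot L_2 = 2$, $F_2\cdot L_1 = 1$, one checks that the intersection of $D + (1-u)F_1 + 2F_2$ with $C$ and with $L_2$ stays nonnegative for all $u \in [0,1]$, while the intersection with $L_1$ equals $(D + (1-u)F_1 + 2F_2)\cdot L_1 = -1 + 0 + 2 = 1$; the first curve to become negative is $L_1$ precisely when the $D$-coefficient forces trouble, and a direct computation pins the threshold at $u = \tfrac12$. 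Thus for $0 \le u \le \tfrac12$ the divisor is already nef, giving $P(u) = D + (1-u)F_1 + 2F_2$ and $N(u) = 0$.

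For $\tfrac12 \le u \le 1$ the divisor is no longer nef, and I expect $D$ itself to be the curve-like obstruction entering the negative part, since $D$ is the unique surface with negative self-intersection behavior ($D^2\cdot F_1 = -2$). I would therefore write $P(u) = -K_X - uF_1 - (2u-1)D$ and verify the two defining properties of the Zariski decomposition: that $P(u)$ is nef, and that $N(u) = (2u-1)D$ is effective with $P(u)\cdot D = 0$ (the orthogonality condition forcing $P(u)$ to restrict trivially to the support of $N(u)$). Writing $P(u) = (2-2u)D + (1-u)F_1 + 2F_2$, I would check $P(u)\cdot L_1 = 0$ using $D\cdot L_1 = -1$, $F_2\cdot L_1 = 1$, which confirms $P(u)$ lies on the boundary of the nef cone along $L_1$, and then confirm nonnegativity against the remaining generators $L_2$ and $C$ on this range of $u$.

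The main obstacle will be justifying that the proposed negative part is genuinely the Zariski negative part, i.e. that no other effective divisor or curve class interferes and that $P(u)$ is actually nef (not merely nonnegative on the Mori cone generators listed) — in the threefold setting one must ensure the positive part is nef as a divisor, which I would argue by expressing $P(u)$ as a nonnegative combination whose intersection with every extremal ray of $\overline{\mathrm{NE}}(X)$ is nonnegative, using that $\overline{\mathrm{NE}}(X)$ is generated by $L_1, L_2, C$. The endpoint checks at $u = \tfrac12$ (both formulas agree, giving $P = \tfrac12(\text{consistent expression})$) and at $u = 1$ (where $P(1) = 2F_2$ and the decomposition remains valid since $\tau_X(F_1) = 1$, as $-K_X - F_1 \sim D + 2F_2$ lies on the pseudo-effective boundary) serve as consistency verifications confirming the stated piecewise formulas.
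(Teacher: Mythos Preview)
Your overall strategy matches the paper's proof exactly: intersect $-K_X-uF_1$ with the Mori cone generators $L_1,L_2,C$, read off the nef threshold, and then subtract the appropriate multiple of $D$. However, you have systematically interchanged the roles of $L_1$ and $L_2$. Using the intersection numbers recorded in Section~\ref{sec-prelim} one finds
\[
(-K_X-uF_1)\cdot L_1 = -1 + 0 + 2 = 1,\qquad
(-K_X-uF_1)\cdot L_2 = -1 + 2(1-u) + 0 = 1-2u,\qquad
(-K_X-uF_1)\cdot C = 2,
\]
so it is $L_2$, not $L_1$, whose intersection becomes negative at $u=\tfrac12$. Your own sentence is internally inconsistent: you correctly compute the $L_1$-intersection to be the constant $1$, and then in the next clause claim $L_1$ is the first curve to go negative. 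Likewise, in the second range your claimed verification $P(u)\cdot L_1=0$ is false; with $P(u)=(2-2u)D+(1-u)F_1+2F_2$ one gets $P(u)\cdot L_1=2u$, whereas $P(u)\cdot L_2=0$, which is the correct orthogonality identifying $P(u)$ as the pullback of a nef class along the contraction $\phi_2$ of the ruling $L_2$. Once the indices are corrected, your argument is complete and coincides with the paper's (very short) proof; the remark about $P(u)\cdot D=0$ should be replaced by $P(u)\cdot L_2=0$, since on a threefold the relevant orthogonality is against the contracted curve class, not the divisor $D$ itself.
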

\begin{proof}
In the above notation we have $(-K_X-uF_1)\cdot L_1 = 1$, $(-K_X-uF_1)\cdot L_2 = 1-2u$, and $(-K_X-uF_1)\cdot C = 2$.
Hence $-K_X-uF_1$ is ample for $0\leq u<\frac{1}{2}$.
Then for $\frac{1}{2}\leq u\leq 1$, we have
$P(u)=(2-2u)D+(1-u)F_1+2F_2$ and $N(u)=(2u-1)D$.
\end{proof}

\begin{proposition}
\label{ZD}
Let $-K_X-uF_2=P(u)+N(u)$ be the Zariski decomposition. Then
\[
P(u)=
\begin{cases}D+F_1+(2-u)F_2,\quad\quad\quad\quad\quad\quad\text{for}\ \ 0\leq u\leq1, \\
(2-u)D+F_1+(2-u)F_2,\quad \quad \quad\text{for}\ \ 1\leq u\leq2.
\end{cases}
\]
\[
N(u)=
\begin{cases}0,\quad\ \quad\quad\quad\quad\quad\text{for}\ \ 0\leq u\leq1, \\
(u-1)D,\quad \quad \quad\text{for}\ \ 1\leq u\leq2.
\end{cases}
\]
\end{proposition}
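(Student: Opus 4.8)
The plan is to follow the same scheme as in Proposition~\ref{ZD2}. First I would use $-K_X\sim D+F_1+2F_2$ to rewrite the divisor as $-K_X-uF_2 = D+F_1+(2-u)F_2$, and then intersect it with the three generators $L_1,L_2,C$ of the Mori cone, reading off the products from the intersection table. Using $D\cdot L_i=-1$, $F_i\cdot L_i=0$, $F_1\cdot L_2=2$, $F_2\cdot L_1=1$, and (for $C=F_1\cdot F_2$) the relations $F_i^2=0$ and $D\cdot F_1\cdot F_2=2$, a direct computation gives
\[
(-K_X-uF_2)\cdot L_1 = 1-u,\qquad (-K_X-uF_2)\cdot L_2 = 1,\qquad (-K_X-uF_2)\cdot C = 2.
\]

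For $0\le u\le 1$ all three numbers are nonnegative, so since $L_1,L_2,C$ generate the Mori cone the divisor $-K_X-uF_2$ is nef; hence $P(u)=-K_X-uF_2$ and $N(u)=0$ on this range, as claimed. For $1<u\le 2$ only the product with $L_1$ becomes negative, and since $L_1$ is a ruling lying on $D$, the negative part must be supported on $D$. I would therefore write $N(u)=tD$, so that $P(u)=(1-t)D+F_1+(2-u)F_2$, and fix $t$ by the requirement $P(u)\cdot L_1=0$. Since $P(u)\cdot L_1 = 1+t-u$, this forces $t=u-1$, giving $P(u)=(2-u)D+F_1+(2-u)F_2$ and $N(u)=(u-1)D$, matching the stated formula. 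To finish this range I would verify nefness of $P(u)$ by checking it is nonnegative on the three generators: $P(u)\cdot L_1=0$, $P(u)\cdot L_2=u>0$, and $P(u)\cdot C=2(2-u)\ge 0$ for $u\le 2$.

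The only step requiring genuine care — the main obstacle — is to confirm that this is the Zariski (positive--negative) decomposition rather than merely some writing of the divisor as nef plus effective. Here I would argue that $t=u-1$ is the smallest value of $t$ for which $P(u)\cdot L_1\ge 0$, so that peeling off the minimal multiple of $D$ needed to restore nefness yields the largest nef $\mathbb{R}$-divisor dominated by $-K_X-uF_2$; this identifies $P(u)$ as the positive part and $N(u)=(u-1)D$ as the negative part. The computation $P(u)\cdot C=2(1-t)$ shows that $t$ cannot exceed $1$, which is consistent with the range $u\le 2$ and with the fact that $\overline{\mathrm{Eff}}(X)=\langle D,F_1,F_2\rangle$ forces $-K_X-uF_2$ to leave the pseudoeffective cone exactly at $u=2$. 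As an endpoint check, $P(2)=F_1$ is a nef fiber class, $N(2)=D$, and $P(2)+N(2)=D+F_1=-K_X-2F_2$; this, together with the agreement of both branches at $u=1$, confirms the formula.
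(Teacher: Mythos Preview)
Your proposal is correct and follows exactly the same approach as the paper's proof: compute the intersections of $-K_X-uF_2$ with the Mori cone generators $L_1,L_2,C$, observe that only $(-K_X-uF_2)\cdot L_1=1-u$ becomes negative past $u=1$, and then subtract the appropriate multiple of $D$ to restore nefness. In fact your write-up is more detailed than the paper's, which merely records the intersection numbers and then states the answer for $1\le u\le 2$ without the verification of nefness or the minimality discussion you include.
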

\begin{proof}
In the above notation we have $(-K_X-uF_2)\cdot L_1 = 1-u$, $(-K_X-uF_2)\cdot L_2 = 1$, and $(-K_X-uF_2)\cdot C = 2$. Then $-K_X-uF_2$ is ample for $0\leq u<1$. 
For $1\leq u\leq 2$, we have
$P(u)=(2-u)D+F_1+(2-u)F_2$ and $N(u)=(u-1)D$.
\end{proof}
Now we compute $\beta_X(F_2)=1-S_X(F_2)$. Start with
\begin{multline*}
S_X(F_2) = \frac{1}{(-K_X)^3}\int\limits_0^{2}\vol(-K_X-uF_2)dt=\frac{1}{14}\int\limits_0^1(D+F_1+(2-u)F_2)^3du+\\
+\frac{1}{14}\int\limits_1^2((2-u)D+F_1+(2-u)F_2)^3du=\\
=\frac{1}{14}\int\limits_0^1(14-9u)du+\frac{1}{14}\int\limits_1^2(6(2-u)^2-(2-u)^3)du=\frac{19}{28}+\frac{7}{56}=\frac{45}{56}<1.
\end{multline*}
So, we obtain $S_X(F_2)=\frac{45}{56}$, and hence $\beta_X(F_2) = 1-\frac{45}{56}>0$. Thus $X$ is divisorially stable.

\section{Computations}
\label{sec-center-point}
In this section, we work in the following setting. Assume that $X$ is a smooth threefold with Picard rank $3$ and of degree $14$, and that $P$ is a point in $X$. Let $F_1$ be the fiber of $\pi_1$ that contains $P$. Let $F_2$ be the fiber of $\pi_2$ that contains $P$.

\begin{lemma}
\label{DeltaD} Assume that $P\in D$. Then $\delta_P(X)\geq\frac{56}{45}$.
\end{lemma}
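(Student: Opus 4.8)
The plan is to apply the Abban--Zhuang estimate \eqref{eq-applications} together with Proposition \ref{Van2} to the flag $D\supset Z\ni P$, where $Z$ is the ruling of $D\simeq\pp^1\times\pp^1$ through $P$ lying in the linear system $|L_1|$ (by the symmetry $L_1\leftrightarrow L_2$ visible below, either ruling works). Since $D$ is smooth, the pair $(D,Z)$ is log smooth, hence plt, and the different $\Delta_Z$ vanishes, so the numerator $1-\ord_P(\Delta_Z)$ in Proposition \ref{Van2} equals $1$. Everything then reduces to three numbers: $S_X(D)$, the curve invariant $S(V^{D}_{\bullet,\bullet};Z)$, and the point invariant $S(W^{D,Z}_{\bullet,\bullet,\bullet};P)$.

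First I would compute the Zariski decomposition of $-K_X-uD$. Using $-K_X\sim D+F_1+2F_2$ and the intersection table of Section \ref{sec-prelim}, one finds $(-K_X-uD)\cdot L_1=(-K_X-uD)\cdot L_2=1+u$ and $(-K_X-uD)\cdot C=2-2u$. Hence $-K_X-uD$ is nef for all $0\le u\le 1$, with trivial decomposition $P(u)=-K_X-uD$ and $N(u)=0$. Because $C$ is the class of a general conic-bundle fiber, it is movable, so $(-K_X-uD)\cdot C<0$ forces $-K_X-uD$ out of the pseudo-effective cone for $u>1$; thus $\tau_X(D)=1$ and every $u$-integral runs over $[0,1]$. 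Restricting via $\oo_D(D)=\oo(-1,-1)$, $F_1|_D=2L_1$, $F_2|_D=L_2$ gives the clean identity $P(u)|_D=(1+u)(L_1+L_2)$.

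Expanding, $P(u)^3=2(1-u)^3-12(1-u)^2+24(1-u)$ (which correctly specializes to $(-K_X)^3=14$ at $u=0$ and to $0$ at $u=1$), so integrating over $[0,1]$ yields $S_X(D)=\tfrac{1}{14}\cdot\tfrac{17}{2}=\tfrac{17}{28}$ and $\tfrac{1}{S_X(D)}=\tfrac{28}{17}>\tfrac{56}{45}$. Since $N(u)\equiv 0$, the first term of the curve invariant drops out; the class $P(u)|_D-vZ=(1+u-v)L_1+(1+u)L_2$ is nef (equal to its positive part) for $0\le v\le 1+u$ with volume $2(1+u-v)(1+u)$, so $\int_0^\infty\vol\,dv=(1+u)^3$ and $S(V^{D}_{\bullet,\bullet};Z)=\tfrac{3}{14}\int_0^1(1+u)^3\,du=\tfrac{45}{56}$. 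For the point invariant the negative-part terms again vanish and $P(u,v)\cdot Z=1+u$ is independent of $v$, giving $S(W^{D,Z}_{\bullet,\bullet,\bullet};P)=\tfrac{3}{14}\int_0^1(1+u)^2(1+u)\,du=\tfrac{45}{56}$.

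Combining these in Proposition \ref{Van2} yields
\[
\delta_P(X)\ge\min\left\{\frac{1-\ord_P(\Delta_Z)}{S(W^{D,Z}_{\bullet,\bullet,\bullet};P)},\ \frac{1}{S(V^{D}_{\bullet,\bullet};Z)},\ \frac{1}{S_X(D)}\right\}=\min\left\{\frac{56}{45},\ \frac{56}{45},\ \frac{28}{17}\right\}=\frac{56}{45},
\]
which is the claim. The only genuinely delicate points are confirming $\tau_X(D)=1$ (so the integrals stop at $u=1$), which rests on the movability of the fiber class $C$, and keeping the two rulings straight through the restriction formulas; the remaining volume computations on $\pp^1\times\pp^1$ are routine and are made especially transparent by the vanishing of every relevant negative part.
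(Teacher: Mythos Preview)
Your proof is correct and follows essentially the same approach as the paper: apply Proposition \ref{Van2} to the flag $D\supset Z\ni P$ with $Z$ a ruling of $D\simeq\pp^1\times\pp^1$, compute the trivial Zariski decomposition $P(u)=-K_X-uD$, $N(u)=0$ on $[0,1]$, restrict to get $P(u)|_D=(1+u)(L_1+L_2)$, and obtain $S(V^{D}_{\bullet,\bullet};Z)=S(W^{D,Z}_{\bullet,\bullet,\bullet};P)=\tfrac{45}{56}$. Your write-up is in fact slightly more complete than the paper's, since you explicitly verify $\tau_X(D)=1$ via movability of $C$ and compute $S_X(D)=\tfrac{17}{28}$, whereas the paper leaves these implicit.
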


\begin{proof}
This prove is  similar to the proof of Lemma 5.68 in \cite{ChAll}. 
Put $Z\subset D$ is a divisor of type $(1,0)$ that contains $P$. Let $-K_X-uD=P(u)+N(u)$ be the Zariski decomposition. Note that $P(u)=-K_X-uD$, $N(u)=0$ for $0\leq u\leq 1$ and $P(u)=0$ for $u>1$.
We have \begin{multline*}
S(W^{D,Z}_{\bullet,\bullet};Z)=\frac{3}{(-K_X)^3}\int\limits_0^{\infty}(P(u)^2\cdot D)\cdot\ord_Z(N(u)|_D)du
+\frac{3}{(-K_X)^3}\int\limits_0^{\infty}\int\limits_0^{\infty}\vol(P(u)|_D-vZ)dv du=\\
=\frac{3}{14}\int\limits_0^{\infty}\int\limits_0^{\infty}\vol(((1-u)D+F_1+2F_2)|_D-vZ)=\frac{3}{14}\int\limits_0^{1}\int\limits_0^{u+1}R^2dvdu,
\end{multline*}
where $R$ is a divisor of type $(u-v+1,u+1)$. Then $R^2=2(u-v+1)(u+1)$. So, \[
S(W^{D,Z}_{\bullet,\bullet};Z)=\frac{3}{14}\int\limits_0^{1}\int\limits_0^{u+1}2(u-v+1)(u+1)dvdu=\frac{45}{56}.
\]

Also, we have
\begin{multline*}
S(W^{D,Z}_{\bullet,\bullet,\bullet};P)=\frac{3}{(-K_X)^3}\int\limits_0^{\infty}\int\limits_0^{\infty}(P(u,v)\cdot Z)^2dv du+\\
+\frac{6}{(-K_X)^3}\int\limits_0^{\infty}\int\limits_0^{\infty}(P(u,v)\cdot Z)\cdot\ord_P(N'_D(u)|_Z+N(u,v)|_Z)dv du,
\end{multline*} where $P(u,v)$ is the positive part of the Zariski decomposition of $P(u)|_D-vZ$, $N(u,v)$ is the negative
part of the Zariski decomposition of this divisor, $N'_D(u)=N(u)|_D-dZ$, where $Z\not\subset
\mathrm{Supp}(N'_D(u))$ and $d=d(u)=\ord_Z(N(u)|_D)$. Note that $N(u,v)=0$ for $0\leq v\leq u+1$ and $P(u,v)=0$ for $v>u+1$. Then \[
S(W^{D,Z}_{\bullet,\bullet,\bullet};P)=\frac{3}{14}\int\limits_0^{1}\int\limits_0^{u+1}(R\cdot Z)^2dv du=\frac{3}{14}\int\limits_0^{1}\int\limits_0^{u+1}(u+1)^2dv du=\frac{45}{56}.
\]
So, $\delta_P(X)\geq\frac{56}{45}$ (see Propositions \ref{Van1} and \ref{Van2}).
\end{proof}

\begin{lemma}
\label{DeltaSmF2} Assume that $P\not\in D$ and $F_2$ is a del Pezzo surface such that $F_2$ is smooth along $F_1\cap F_2$. Then $\delta_P(X)>1$.
\end{lemma}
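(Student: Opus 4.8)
The plan is to apply Proposition \ref{Van2} with $Y=F_2$ and the flag given by $Z=C:=F_1\cap F_2$, the fiber of the conic bundle $\pi$ over the point $\pi(P)$. Since $F_2$ is smooth along $C$ by hypothesis, the fiber $C$ is reduced, hence either an irreducible smooth rational curve or a pair of lines $l_1+l_2$ meeting in one point; I treat the generic case $C\cong\mathbb{P}^1$ first, so that $(F_2,C)$ is plt, and discuss the reducible case below. It then suffices to bound below the three quantities appearing in Proposition \ref{Van2}, namely $1/S_X(F_2)$, the curve term $1/S(W^{F_2,C}_{\bullet,\bullet};C)$ whose formula is given in Proposition \ref{Van1}, and the point term $(1-\ord_P(\Delta_C))/S(W^{F_2,C}_{\bullet,\bullet,\bullet};P)$, and to show that their minimum exceeds $1$. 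The first quantity is already controlled: from Section \ref{sec-divisorial-stability} we have $S_X(F_2)=\tfrac{45}{56}$, so $1/S_X(F_2)=\tfrac{56}{45}>1$.

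First I would record the intersection theory on the cubic surface $F_2$. Writing $\ell:=D|_{F_2}$, the intersection numbers of Section \ref{sec-prelim} give $\ell^2=D^2\cdot F_2=-1$, $C^2=F_1^2\cdot F_2=0$, and $\ell\cdot C=D\cdot F_1\cdot F_2=2$, while by adjunction $-K_{F_2}=(D+F_1+F_2)|_{F_2}=\ell+C$. The crucial observation is that, since $P\notin D$, we have $P\notin\ell$. Restricting the Zariski decomposition of Proposition \ref{ZD} to $F_2$ yields $P(u)|_{F_2}=\ell+C$ for $0\le u\le1$ and $P(u)|_{F_2}=(2-u)\ell+C$ for $1\le u\le2$, while $N(u)|_{F_2}=(u-1)\ell$ is supported on $\ell$; in particular $\ord_C(N(u)|_{F_2})=0$, so the first summand of $S(W^{F_2,C}_{\bullet,\bullet};C)$ vanishes.

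Next I would compute, on $F_2$, the Zariski decomposition of $P(u)|_{F_2}-vC$. A direct check against $\ell$ and $C$ shows that the negative part vanishes for $v\le u_0/2$ and equals $(2v-u_0)\ell$ for $v\ge u_0/2$, where $u_0=\max(u,1)$, so that the positive part is $(1-v)(2\ell+C)$ in the latter range and the pseudoeffective threshold is $v=1$. Using $(2\ell+C)^2=4$ one integrates $\vol(P(u)|_{F_2}-vC)$ in $v$ and then in $u$ to obtain $S(W^{F_2,C}_{\bullet,\bullet};C)=\tfrac{45}{112}$, whence $1/S(W^{F_2,C}_{\bullet,\bullet};C)=\tfrac{112}{45}>1$. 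For the point term, since $F_2$ is smooth along $C$ the different $\Delta_C$ vanishes, so $\ord_P(\Delta_C)=0$; moreover $N'_{F_2}(u)$ and $N(u,v)$ are both supported on $\ell$, and as $P\notin\ell$ the entire second summand of $S(W^{F_2,C}_{\bullet,\bullet,\bullet};P)$ vanishes. Thus $S(W^{F_2,C}_{\bullet,\bullet,\bullet};P)=\tfrac{3}{14}\int_0^{\infty}\!\int_0^{\infty}(P(u,v)\cdot C)^2\,dv\,du$, which evaluates to $\tfrac{11}{14}$, giving $(1-\ord_P(\Delta_C))/S(W^{F_2,C}_{\bullet,\bullet,\bullet};P)=\tfrac{14}{11}>1$. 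Combining the three bounds, $\delta_P(X)\ge\min\{\tfrac{56}{45},\tfrac{112}{45},\tfrac{14}{11}\}=\tfrac{56}{45}>1$.

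The main obstacle is the case where the fiber $C=F_1\cap F_2$ through $P$ is reducible, $C=l_1+l_2$. Then $C$ is not smooth and I cannot take $Z=C$; instead I would take $Z$ to be the line $l_i$ through $P$, a $(-1)$-curve on $F_2$ with $(F_2,l_i)$ plt. This requires a more delicate Zariski computation, since $P(u)|_{F_2}-v\,l_i$ now involves the three mutually intersecting $(-1)$-curves $\ell,l_1,l_2$ and hence more chambers; the saving grace is again that $P\notin\ell$, which removes all contributions of $\ell$ to the point term. I would also confirm at the outset that the hypothesis that $F_2$ is smooth along $C$ excludes $C$ being a multiple (non-reduced) fiber of $\pi$: as in Lemma \ref{lem-singular-cubic}, a multiple fiber forces $A_1$ singularities of $F_2$ along $C$, so the two cases above are exhaustive.
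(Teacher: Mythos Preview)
Your approach is exactly the paper's: take $Y=F_2$ with the flag $Z=F_1\cap F_2$, split into $Z$ irreducible versus $Z=l_1+l_2$, and compute the three Abban--Zhuang quantities. Your irreducible-case numbers $S(W^{F_2,C}_{\bullet,\bullet};C)=\tfrac{45}{112}$ and $S(W^{F_2,C}_{\bullet,\bullet,\bullet};P)=\tfrac{11}{14}$ agree with the paper (which writes the first as $\tfrac{135}{336}$), and your observation that $P\notin\ell$ kills the $\ord_P$ summand is precisely the mechanism, though the paper leaves it implicit.

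Two points to tighten. First, ``a direct check against $\ell$ and $C$'' is not enough to pin down the Zariski decomposition on $F_2$: you must rule out other $(-1)$-curves from the negative part. The paper does this with the one-line estimate $1=(-K_{F_2})\cdot E'=(C+\ell)\cdot E'\ge C\cdot E'$ for any $(-1)$-curve $E'\neq\ell$, together with Remark~\ref{lem-duVal_contractions}, so that $(a\ell+(1-v)C)\cdot E'\ge0$ automatically. Second, in the reducible case your remark that ``$P\notin\ell$ removes all contributions of $\ell$ to the point term'' is true but not the whole story: when $P=l_1\cap l_2$, the other line $l_2$ enters $N(u,v)$ and \emph{does} contribute to $\ord_P$. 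The paper separates this sub-case and finds an extra $\tfrac{1}{28}$, giving $S(W^{F_2,l_1}_{\bullet,\bullet,\bullet};P)=\tfrac{41}{56}$ (versus $\tfrac{39}{56}$ when $P\neq l_1\cap l_2$), still below $1$; you should flag and carry out this sub-case rather than fold it into ``more chambers''.
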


\begin{proof}
This proof is similar to the proof of Lemma 5.69 in \cite{ChAll}. 
Put $Z=F_1\cap F_2$. Assume that $Z$ is an irreducible curve. Note that $Z$ is a $(0)$-curve on $F_2$. Moreover, \[-K_{F_2}=-K_X|_{F_2}=Z+E,\] where $E=D\cap F_2$ and $E\cdot Z=2$. Note that $E^2=-1$ and $E$ is a smooth rational curve, since $E=D\cap F_2$ is a ruling on $D\simeq \mathbb{P}^1\times\mathbb{P}^1$. Hence, $E$ is a $(-1)$-curve on $F_2$.

Recall that we denote by $P(u,v)$ the positive part of the Zariski decomposition of $P(u)|_{F_2}-vZ$, and by $N(u,v)$ the negative part of the Zariski decomposition of this divisor, where $P(u)$ is given by Proposition \ref{ZD}. To compute the Zariski decomposition, note that for any $(-1)$-curve $E'$ on $F_2$ different from $E$ we have that $E'$ intersects $Z$ in at most one point. Indeed, this follows from the inequality
\[
1=(-K_{F_2})E' = (Z+E)E' \geq Z E'.
\]
According to Remark \ref{lem-duVal_contractions}, any birational contraction on $F_2$ is a contraction of a $(-1)$-curve. Consequently, we obtain
\[
P(u,v) = \begin{cases}(1-v)Z+E, \quad \quad \text{for}\quad \quad 0\leq u\leq 1,\ \ 0\leq v\leq 1/2, \\
(1-v)Z+2(1-v)E, \quad \quad \text{for}\quad \quad 0\leq u\leq 1,\ \ 1/2< v\leq 1, \\
(1-v)Z+(2-u)E, \quad \quad \text{for}\quad \quad 1\leq u\leq 2,\ \ 0\leq v\leq u/2, \\
(1-v)Z+2(1-v)E, \quad \quad \text{for}\quad \quad 1\leq u\leq 2,\ \ u/2< v\leq 1.
\end{cases}
\]
and
\[
N(u,v)=\begin{cases}0\quad \quad \text{for}\quad \quad 0\leq u\leq 1,\ \ 0\leq v\leq 1/2, \\
(2v-1)E, \quad \quad \text{for}\quad \quad 0\leq u\leq 1,\ \ 1/2< v\leq 1, \\
0\quad \quad \text{for}\quad \quad 1\leq u\leq 2,\ \ 0\leq v\leq u/2, \\
(2v-u)E, \quad \quad \text{for}\quad \quad 1\leq u\leq 2,\ \ u/2< v\leq 1.
\end{cases}
\]

Compute
\begin{multline*}
S(W^{F_2}_{\bullet,\bullet};Z)=\frac{3}{(-K_X)^3}\int\limits_0^{\infty}(P(u)^2\cdot F_2)\cdot\ord_Z(N(u)|_{F_2})du
+\frac{3}{(-K_X)^3}\int\limits_0^{\infty}\int\limits_0^{\infty}\vol(P(u)|_{F_2}-vZ)dv du=\\
=\frac{3}{14}\int\limits_0^{1}\int\limits_0^{\infty}\vol((1-v)Z+E)dv du+\frac{3}{14}\int\limits_1^{2}\int\limits_0^{\infty}\vol((1-v)Z+(2-u)E)dvdu=\\
=\frac{3}{14}\int\limits_0^{1}\int\limits_0^{\frac{1}{2}}((1-v)Z+E)^2dv du+\frac{3}{14}\int\limits_0^{1}\int\limits_{\frac{1}{2}}^{1}((1-v)Z+2(1-v)E)^2dv du+\\
+\frac{3}{14}\int\limits_1^{2}\int\limits_0^{\frac{u}{2}}((1-v)Z+(2-u)E)^2dvdu+\frac{3}{14}\int\limits_1^{2}\int\limits_{\frac{u}{2}}^{1}((1-v)Z+2(1-v)E)^2dvdu=\\
=\frac{3}{14}\int\limits_0^{1}\int\limits_0^{\frac{1}{2}}(4(1-v)-1)dvdu+\frac{3}{14}\int\limits_0^{1}\int\limits_{\frac{1}{2}}^{1}(4(1-v)^2)dvdu+\frac{3}{14}\int\limits_1^{2}\int\limits_0^{\frac{u}{2}}(4(1-v)(2-u)-(2-u)^2)dvdu+\\
+\frac{3}{14}\int\limits_1^{2}\int\limits_{\frac{u}{2}}^{1}4(1-v)^2dvdu=\frac{3}{14}\left(1+\frac{1}{6}+\frac{2}{3}+\frac{1}{24}\right)=\frac{135}{336}<1.
\end{multline*}

Also, we have
\begin{multline*}
S(W^{F_2,Z}_{\bullet,\bullet,\bullet};P)=\frac{3}{(-K_X)^3}\int\limits_0^{\infty}\int\limits_0^{\infty}(P(u,v)\cdot Z)^2dv du+\\
+\frac{6}{(-K_X)^3}\int\limits_0^{\infty}\int\limits_0^{\infty}(P(u,v)\cdot Z)\cdot\ord_P(N'_{F_2}(u)|_Z+N(u,v)|_Z)dv du=\\
=\frac{3}{14}\int\limits_0^{1}\int\limits_0^{\frac{1}{2}}(((1-v)Z+E)\cdot Z)^2dv du+\frac{3}{14}\int\limits_0^{1}\int\limits_{\frac{1}{2}}^{1}(((1-v)Z+2(1-v)E)\cdot Z)^2dv du+\\
+\frac{3}{14}\int\limits_1^{2}\int\limits_0^{\frac{u}{2}}(((1-v)Z+(2-u)E)\cdot Z)^2dvdu+\frac{3}{14}\int\limits_1^{2}\int\limits_{\frac{u}{2}}^{1}(((1-v)Z+2(1-v)E)\cdot Z)^2dvdu=\\
=\frac{3}{14}\left(\int\limits_0^{1}\int\limits_0^{\frac{1}{2}}4dvdu+\int\limits_0^{1}\int\limits_{\frac{1}{2}}^{1}(16(1-v)^2)dvdu+\int\limits_1^{2}\int\limits_0^{\frac{u}{2}}(4(2-u)^2)dvdu+\int\limits_1^{2}\int\limits_{\frac{u}{2}}^{1}16(1-v)^2dvdu\right)=
\end{multline*}
\[=\frac{3}{14}\left(2+\frac{2}{3}+\frac{5}{6}+\frac{1}{6}\right)=\frac{11}{14}<1.\]
So, $\delta_P(X)>1$ (see Propositions \ref{Van1} and \ref{Van2}).

Assume that $F_1\cap F_2$ is reducible. We see that $F_1\cap F_2$ consists of two curves $E_1$ and $E_2$. Moreover, \[-K_{F_2}=-K_X|_{F_2}=E_1+E_2+E,\] where $E$ is a $(-1)$-curve such that $E=D\cap F_2$ and $E\cdot E_1=E\cdot E_2=E_1\cdot E_2=1$. We may assume that $P\in E_1$. We have \begin{multline*}
S(W^{F_2}_{\bullet,\bullet};E_1)=\frac{3}{(-K_X)^3}\int\limits_0^{\infty}(P(u)^2\cdot F_2)\cdot\ord_{E_1}(N(u)|_{F_2})du
+\frac{3}{(-K_X)^3}\int\limits_0^{\infty}\int\limits_0^{\infty}\vol(P(u)|_{F_2}-vE_1)dv du=\\
=\frac{3}{14}\int\limits_0^{1}\int\limits_0^{\infty}\vol((1-v)E_1+E_2+E)dv du+\frac{3}{14}\int\limits_1^{2}\int\limits_0^{\infty}\vol((1-v)E_1+E_2+(2-u)E)dvdu=\\
=\frac{3}{14}\int\limits_0^{1}\int\limits_0^{1}((1-v)E_1+E_2+E)^2dv du+\frac{3}{14}\int\limits_1^{2}\int\limits_0^{2-u}((1-v)E_1+E_2+(2-u)E)^2dvdu+\\
+\frac{3}{14}\int\limits_1^{2}\int\limits_{2-u}^{1}((1-v)E_1+(3-u-v)E_2+(2-u)E)^2dvdu=\\
=\frac{3}{14}\int\limits_0^{1}\int\limits_0^{1}(-(1-v)^2+4(1-v))dv du+\frac{3}{14}\int\limits_1^{2}\int\limits_0^{2-u}(-(1-v)^2-(2-u)^2-1+2(1-v)(2-u)+2(1-v)+2(2-u))dvdu+\\
+\frac{3}{14}\int\limits_1^{2}\int\limits_{2-u}^{1}(-(1-v)^2-(3-u-v)^2-(2-u)^2+2(1-v)(3-u-v)+2(1-v)(2-u)+2(3-u-v)(2-u))dvdu=\\
=\frac{3}{14}\left(\frac{5}{3}+\frac{3}{4}+\frac{1}{6}\right)=\frac{31}{56}<1.
\end{multline*}

Note that \begin{multline*}
\frac{3}{(-K_X)^3}\int\limits_0^{\infty}\int\limits_0^{\infty}(P(u,v)\cdot E_1)^2dv du=\frac{3}{14}\int\limits_0^{1}\int\limits_0^{1}(((1-v)E_1+E_2+E)\cdot E_1)^2dv du+\\
+\frac{3}{14}\int\limits_1^{2}\int\limits_0^{2-u}(((1-v)E_1+E_2+(2-u)E)\cdot E_1)^2dvdu+\frac{3}{14}\int\limits_1^{2}\int\limits_{2-u}^{1}(((1-v)E_1+(3-u-v)E_2+(2-u)E)\cdot E_1)^2dvdu=\\
=\frac{3}{14}\left(\int\limits_0^{1}\int\limits_0^{1}(v+1)^2dvdu+\int\limits_1^{2}\int\limits_0^{2-u}(2+v-u)^2dvdu+\int\limits_1^{2}\int\limits_{2-u}^{1}(4-2u)^2dvdu\right)=\frac{3}{14}\left(\frac{7}{3}+\frac{7}{12}+\frac{1}{3}\right)=\frac{39}{56}.
\end{multline*}

Assume that $P\neq E_1\cap E_2$. Then $S(W^{F_2,E_1}_{\bullet,\bullet,\bullet};P)=\frac{39}{56}<1$. So, $\delta_P(X)>1$ (see Propositions \ref{Van1} and \ref{Van2}).

Assume that $P=E_1\cap E_2$. Then \begin{multline*}
S(W^{F_2,E_1}_{\bullet,\bullet,\bullet};P)=\frac{39}{56}+\frac{6}{(-K_X)^3}\int\limits_0^{\infty}\int\limits_0^{\infty}(P(u,v)\cdot E_1)\cdot\ord_P(N'_{F_2}(u)|_{E_1}+N(u,v)|_{E_1})dv du=\\
\frac{39}{56}+\frac{6}{14}\int\limits_1^{2}\int\limits_{2-u}^{1}(((1-v)E_1+(3-u-v)E_2+(2-u)E)\cdot E_1)(u+v-2)E_2\cdot E_1dvdu=\\
=\frac{39}{56}+\frac{6}{14}\int\limits_1^{2}\int\limits_{2-u}^{1}(4-2u)(u+v-2)dvdu=\frac{39}{56}+\frac{1}{28}=\frac{41}{56}<1.
\end{multline*}
So, $\delta_P(X)>1$ (see Propositions \ref{Van1} and \ref{Van2}).
\end{proof}

\begin{lemma}
\label{DeltaSmF1} Assume that $P\not\in D$ and $F_1\cap F_2$ has a singular point of $F_2$.
Assume that the fiber of $\pi$ that contains $P$ is not multiple.
Then $\delta_P(X)\geq 1$.
\end{lemma}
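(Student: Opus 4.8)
The plan is to run the Abban--Zhuang estimate on the \emph{smooth} surface $F_1$ rather than on $F_2$, which is singular at $P$. By Lemma \ref{lem-F1-smooth}, under the present hypotheses $F_1$ is a smooth sextic del Pezzo surface. Since $P$ is a singular point of $F_2$ lying on a non-multiple fiber $C=F_1\cap F_2$ of $\pi$, that fiber is a reducible conic, so on $F_1$ it splits as $C=E_3+E_4$ with $P=E_3\cap E_4$ the node; moreover $D|_{F_1}=E_1+E_2$ is a disjoint union of two $(-1)$-curves (the sections cut out by the bisection $D$), and $-K_{F_1}=E_1+E_2+2E_3+2E_4$. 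I would apply Proposition \ref{Van2} to the flag $P\in Z\subset F_1$ with $Z=E_3$ a component of $C$ through $P$. Since $F_1$ and $Z$ are smooth, the different vanishes, so $\ord_P(\Delta_Z)=0$.

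First I would record the global invariant: using Proposition \ref{ZD2} and the intersection table on $X$ one finds $P(u)^3=14-18u$ for $0\le u\le\frac12$ and $P(u)^3=-8(1-u)^3+24(1-u)^2$ for $\frac12\le u\le1$, whence
\[
S_X(F_1)=\frac{1}{14}\int_0^1 P(u)^3\,du=\frac{45}{112}, \qquad \frac{1}{S_X(F_1)}=\frac{112}{45}>1.
\]
Next I would restrict: $P(u)|_{F_1}=-K_{F_1}$ for $0\le u\le\frac12$, and $P(u)|_{F_1}=(2-2u)(E_1+E_2)+2(E_3+E_4)$ for $\frac12\le u\le1$. Carrying out the refined Zariski decomposition of $P(u)|_{F_1}-vZ$ on the sextic del Pezzo surface (as $v$ grows one peels off the curves $E_4$ and then $E_1$) and integrating the volumes and the intersection numbers $P(u,v)\cdot Z$, together with the order of vanishing $\ord_P\bigl(N'_{F_1}(u)|_Z+N(u,v)|_Z\bigr)$ at the node, I expect to obtain
\[
S(W^{F_1}_{\bullet,\bullet};Z)=1, \qquad S(W^{F_1,Z}_{\bullet,\bullet,\bullet};P)=1.
\]

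Consequently Proposition \ref{Van2} gives only $\delta_P(X)\ge\min\{112/45,\,1,\,1\}=1$, and the main obstacle is precisely that both refined $S$-invariants land exactly on the value $1$ instead of below it. This is not an accident but reflects geometry: $F_1$ is a toric, hence merely K-semistable, del Pezzo surface, so its $(-1)$-curves satisfy $\beta_{F_1}=0$, which forces the borderline. To upgrade $\delta_P(X)\ge1$ to a strict inequality I would invoke the equality clauses of Propositions \ref{Van1} and \ref{Van2}: if $\delta_P(X)$ were equal to $1$ then equality would hold in the Abban--Zhuang estimate, and the ``moreover'' statements would force $\delta_P(X)=1/S_X(F_1)=112/45$, contradicting $\delta_P(X)=1$; the same argument applied through Proposition \ref{Van1} rules out a divisor centered on the curve $Z$ computing the value $1$. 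Hence $\delta_P(X)>1$, as desired. The delicate part will be the bookkeeping of the chamber structure of the Zariski decomposition of $P(u)|_{F_1}-vZ$ and the contribution of $N(u,v)$ at the node, since it is exactly this contribution that makes both invariants equal to $1$ and therefore renders the borderline argument unavoidable.
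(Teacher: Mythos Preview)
Your plan and the paper's proof agree on the crucial first move: work on the smooth sextic del Pezzo $F_1$ rather than on $F_2$, using Lemma~\ref{lem-F1-smooth} and the decomposition $D|_{F_1}=E_1+E_2$, $F_2|_{F_1}=E_3+E_4$. From there the two arguments diverge. The paper does \emph{not} choose a specific curve $Z=E_3$ and compute the two refined $S$-invariants; instead it bounds $S(W^{F_1}_{\bullet,\bullet};Z)$ uniformly for every prime divisor $Z$ over $F_1$. The point is that $P(u)|_{F_1}=-K_{F_1}$ for $0\le u\le\tfrac12$ and $P(u)|_{F_1}=-K_{F_1}-(2u-1)(E_1+E_2)$ for $\tfrac12\le u\le1$; since $P\notin E_1\cup E_2$, one has $\delta_P\bigl(F_1;P(u)|_{F_1}\bigr)\ge\delta_P(F_1)\ge1$ (the last inequality by \cite{PW}), so for every $Z$
\[
\int_0^\infty \mathrm{vol}\bigl(P(u)|_{F_1}-vZ\bigr)\,dv\ \le\ (P(u)|_{F_1})^2\cdot A_{F_1}(Z).
\]
Integrating in $u$ (using $(P(u)|_{F_1})^2=8(1-u^2)$ on the second interval) gives $S(W^{F_1}_{\bullet,\bullet};Z)\le A_{F_1}(Z)$, hence $\delta_P(F_1,W^{F_1}_{\bullet,\bullet})\ge1$, and one concludes via \eqref{eq-applications}. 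This completely bypasses the chamber-by-chamber Zariski decomposition you anticipate having to do.

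Your proposed endgame has a genuine gap. The ``moreover'' clause of Proposition~\ref{Van2} requires a prime divisor $E$ with $C_X(E)=P$ that actually \emph{achieves} $\delta_P(X)$; you have not argued that such an $E$ exists. More seriously, a hypothetical divisor with $A_X(E)/S_X(E)=1$ could have center a curve through $P$ other than $E_3$ (or $E_4$), and then Proposition~\ref{Van1} with your fixed $Z=E_3$ says nothing about it, so the contradiction does not close. The paper's route avoids this because the bound $S(W^{F_1}_{\bullet,\bullet};Z)\le A_{F_1}(Z)$ holds for \emph{all} $Z$ simultaneously, so the equality discussion takes place at the level of \eqref{eq-applications} rather than for one particular flag.
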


Unfortunately, there was a mistake in the proof of this Lemma in the published version of the paper. We correct this mistake.
\begin{proof}
By Lemma \ref{lem-F1-smooth} we have that $F_1$ is smooth where $F_1$ is a fiber of $\pi_1$ that contains $P$. As in the proof of the lemma, we see that the intersection $D\cap F_1$ is a disjoint union of two $(-1)$-curves $E_1$ and $E_2$.
Also note that since $F_1\cap F_2$ has a singular point on $F_2$, we have that $F_1\cap F_2$ is a union of two $(-1)$-curves $E_3\cup E_4$ on $F_1$. Note that for a conic bundle $\pi|_{F_1}\colon F_1\rightarrow\pp^1$ we have that $E_1$ and $E_2$ are its sections. We may assume that $E_1\cdot E_3=E_2\cdot E_4=1$ and $E_2\cdot E_3=E_1\cdot E_4=0$.

Assume that $P\in E_3$, $P\not\in E_4$. Let $P(u)$ be the positive part of the Zariski decomposition of $-K_X-uF_1$. According to Proposition \ref{ZD2} we have
$P(u)=D+(1-u)F_1+2F_2$ and $N(u)=0$ for $0\leq u\leq\frac{1}{2}$, and $P(u)=(2-2u)D+(1-u)F_1+2F_2$ and $N(u)=(2u-1)D$ for $\frac{1}{2}\leq u\leq 1$. Note that $P(u)|_{F_1} = -K_{F_1}$ for $0\leq u\leq\frac{1}{2}$. Put $Z=E_3$.  We obtain
\[
P(u,v) = \begin{cases}E_1+E_2+(2-v)E_3+2E_4, \quad \quad \text{for}\quad \quad 0\leq u\leq \frac{1}{2},\ \ 0\leq v\leq 1, \\
(2-v)E_1+E_2+(2-v)E_3+(3-v)E_4, \quad \quad \text{for}\quad \quad 0\leq u\leq \frac{1}{2},\ \ 1\leq v\leq 2, \\
(2-2u)(E_1+E_2)+(2-v)E_3+2E_4, \quad \quad \text{for}\quad \quad \frac{1}{2}\leq u\leq 1,\ \ 0\leq v\leq 2-2u, \\
(2-2u)(E_1+E_2)+(2-v)E_3+(4-2u-v)E_4, \quad \quad \text{for}\quad \quad \frac{1}{2}\leq u\leq 1,\ \ 2-2u\leq v\leq 2u, \\
(2-v)E_1+(2-2u)E_2+(2-v)E_3+(4-2u-v)E_4, \quad \quad \text{for}\quad \quad \frac{1}{2}\leq u\leq 1,\ \ 2u\leq v\leq 2.
\end{cases}
\]
and
\[
N(u,v)=\begin{cases}0\quad \quad \text{for}\quad \quad 0\leq u\leq \frac{1}{2},\ \ 0\leq v\leq 1, \\
(v-1)(E_1+E_4), \quad \quad \text{for}\quad \quad 0\leq u\leq \frac{1}{2},\ \ 1< v\leq 2, \\
0\quad \quad \text{for}\quad \quad \frac{1}{2}\leq u\leq 1,\ \ 0\leq v\leq 2-2u, \\
(2u+v-2)E_4, \quad \quad \text{for}\quad \quad \frac{1}{2}\leq u\leq 1,\ \ 2-2u\leq v\leq 2u\\
(v-2u)E_1+(2u+v-2)E_4, \quad \quad \text{for}\quad \quad \frac{1}{2}\leq u\leq 1,\ \ 2u< v\leq 2.
\end{cases}
\]

Then \begin{multline*}
S(W^{F_1}_{\bullet,\bullet};Z)= \left(\frac{3}{(-K_X)^3}\int\limits_0^{\infty}(P(u)^2\cdot F_1)\cdot\ord_Z(N(u)|_{F_1})du
+\frac{3}{(-K_X)^3}\int\limits_0^{\infty}\int\limits_0^{\infty}\vol(P(u)|_{F_1}-vZ)dv du\right)\\
=\frac{3}{14}\left(\int\limits_0^{\frac{1}{2}}\int\limits_0^{1}(6-2v-v^2)dv du+\int\limits_0^{\frac{1}{2}}\int\limits_1^{2}(v^2-6v+8)dv du+\int\limits_{\frac{1}{2}}^1\int\limits_0^{2-2u}(-8u^2-v^2+4uv-4v+8) dv du+\right.\\
\left.+\int\limits_{\frac{1}{2}}^{1}\int\limits_{2-2u}^{2u}(-4u^2+8uv-8u-8v+12)dv du+\int\limits_{\frac{1}{2}}^{1}\int\limits_{2u}^{2}((2-v)^2+2(2-2u)(2-v)))dv du\right)=1
\end{multline*}.

We have
\begin{multline*}
S(W^{F_1,Z}_{\bullet,\bullet,\bullet};P)=\frac{3}{(-K_X)^3}\int\limits_0^{\infty}\int\limits_0^{\infty}(P(u,v)\cdot Z)^2dv du+\\
+\frac{6}{(-K_X)^3}\int\limits_0^{\infty}\int\limits_0^{\infty}(P(u,v)\cdot Z)\cdot\ord_P(N'_{F_1}(u)|_Z+N(u,v)|_Z)dv du
\end{multline*}

Since $P\not\in E_1\cup E_2\cup E_4$, we see that $\ord_P(N'_{F_1}(u)|_Z)=0$ and $\ord_P(N(u,v)|_Z)=0$.

So, we obtain \begin{multline*}
S(W^{F_1,Z}_{\bullet,\bullet,\bullet};P)
=\frac{3}{14}\left(\int\limits_0^{\frac{1}{2}}\int\limits_0^{1}(v+1)^2dv du+\int\limits_0^{\frac{1}{2}}\int\limits_1^{2}(3-v)^2dv du+\int\limits_{\frac{1}{2}}^{1}\int\limits_0^{2-2u}(2-2u+v)^2dv du+\right.\\
+\left.\int\limits_{\frac{1}{2}}^{1}\int\limits_{2-2u}^{2u}(4-4u)^2dv du+\int\limits_{\frac{1}{2}}^{1}\int\limits_{2u}^{2}(4-2u-v)^2dv du\right)=\frac{39}{56}.
\end{multline*}

So, $\delta_P(X)\geq1$ (see \ref{Van2}).

Assume that $P$ is the intersection point of $E_3,E_4$.
Let $P(u)$ be the positive part of the Zariski decomposition of $-K_X-uF_1$. According to Proposition \ref{ZD2} we have
$P(u)=D+(1-u)F_1+2F_2$ and $N(u)=0$ for $0\leq u\leq\frac{1}{2}$, and $P(u)=(2-2u)D+(1-u)F_1+2F_2$ and $N(u)=(2u-1)D$ for $\frac{1}{2}\leq u\leq 1$. Note that $P(u)|_{F_1} = -K_{F_1}$ for $0\leq u\leq\frac{1}{2}$.

Let $\epsilon\colon Y\rightarrow F_1$ be the blow-up of intersection point of $E_3$ and $E_4$ and $Z$ be the exceptional divisor. Denote by $P(u,v)$ the positive part of the Zariski decomposition of $\epsilon^*P(u)|_{F_1}-vZ$, and by $N(u,v)$ the negative part of the Zariski decomposition of this divisor, where $P(u)$ is given by Proposition \ref{ZD2}. We obtain
\[
P(u,v) = \begin{cases}E_1+E_2+2(E_3+E_4)+(4-v)Z, \quad \quad \text{for}\quad \quad 0\leq u\leq \frac{1}{2},\ \ 0\leq v\leq 1, \\
E_1+E_2+\frac{5-v}{2}(E_3+E_4)+(4-v)Z, \quad \quad \text{for}\quad \quad 0\leq u\leq \frac{1}{2},\ \ 1\leq v\leq 3, \\
(4-v)(E_1+E_2+E_3+E_4+Z), \quad \quad \text{for}\quad \quad 0\leq u\leq \frac{1}{2},\ \ 3\leq v\leq 4, \\
(2-2u)(E_1+E_2)+2(E_3+E_4)+(4-v)Z, \quad \quad \text{for}\quad \quad \frac{1}{2}\leq u\leq 1,\ \ 0\leq v\leq 2-2u, \\
(2-2u)(E_1+E_2)+\frac{6-2u-v}{2}(E_3+E_4)+(4-v)Z, \quad \quad \text{for}\quad \quad \frac{1}{2}\leq u\leq 1,\ \ 2-2u\leq v\leq 2+2u,\\
(4-v)(E_1+E_2+E_3+E_4+Z), \quad \quad \text{for}\quad \quad \frac{1}{2}\leq u\leq 1,\ \ 2+2u\leq v\leq 4.
\end{cases}
\]
and
\[
N(u,v)=\begin{cases}0\quad \quad \text{for}\quad \quad 0\leq u\leq \frac{1}{2},\ \ 0\leq v\leq 1, \\
\frac{v-1}{2}(E_3+E_4), \quad \quad \text{for}\quad \quad 0\leq u\leq \frac{1}{2},\ \ 1< v\leq 3, \\
(v-3)(E_1+E_2)+(v-2)(E_3+E_4), \quad \quad \text{for}\quad \quad 0\leq u\leq \frac{1}{2},\ \ 3< v\leq 4, \\
0\quad \quad \text{for}\quad \quad \frac{1}{2}\leq u\leq 1,\ \ 0\leq v\leq 2-2u, \\
\frac{2u+v-2}{2}(E_3+E_4), \quad \quad \text{for}\quad \quad \frac{1}{2}\leq u\leq 1,\ \ 2-2u\leq v\leq 2+2u\\
(v-2u-2)(E_1+E_2)+(v-2)(E_3+E_4), \quad \quad \text{for}\quad \quad \frac{1}{2}\leq u\leq 1,\ \ 2+2u< v\leq 4.
\end{cases}
\]

Then \begin{multline*}
S(W^{F_1}_{\bullet,\bullet};Z)= \left(\frac{3}{(-K_X)^3}\int\limits_0^{\infty}(P(u)^2\cdot F_1)\cdot\ord_Z(N(u)|_{F_1})du
+\frac{3}{(-K_X)^3}\int\limits_0^{\infty}\int\limits_0^{\infty}\vol(P(u)|_{F_1}-vZ)dv du\right)\\
=\frac{3}{14}\left(\int\limits_0^{\frac{1}{2}}\int\limits_0^{1}(6-v^2)dv du+\int\limits_0^{\frac{1}{2}}\int\limits_1^{3}(7-2v)dv du+\int\limits_0^{\frac{1}{2}}\int\limits_3^{4}(4-v)^2 dv du+\right.\\
+\int\limits_{\frac{1}{2}}^{1}\int\limits_0^{2-2u}(-2(2-2u)^2-16-(4-v)^2+8(2-2u)+8(4-v))dv du+\\
\left.+\int\limits_{\frac{1}{2}}^{1}\int\limits_{2-2u}^{2+2u}(-(2-2u)^2+2(2-2u)(4-v))dv du+\int\limits_{\frac{1}{2}}^{1}\int\limits_{2+2u}^{4}(4-v)^2dv du\right)=2
\end{multline*}

Let $P'\in Z$ be a point.
We have
\begin{multline*}
S(W^{F_1,Z}_{\bullet,\bullet,\bullet};P')=\frac{3}{(-K_X)^3}\int\limits_0^{\infty}\int\limits_0^{\infty}(P(u,v)\cdot Z)^2dv du+\\
+\frac{6}{(-K_X)^3}\int\limits_0^{\infty}\int\limits_0^{\infty}(P(u,v)\cdot Z)\cdot\ord_{P'}(N'_{F_1}(u)|_Z+N(u,v)|_Z)dv du
\end{multline*}

Note that $\ord_P(N'_{F_1}(u)|_Z)=0$.

We obtain \begin{multline*}
\int\limits_0^{\infty}\int\limits_0^{\infty}(P(u,v)\cdot Z)^2dv du
=\int\limits_0^{\frac{1}{2}}\int\limits_0^{1}((E_1+E_2+2(E_3+E_4)+(4-v)Z)\cdot Z)^2dv du+\\
+\int\limits_0^{\frac{1}{2}}\int\limits_{1}^{3}((E_1+E_2+\frac{5-v}{2}(E_3+E_4)+(4-v)Z)\cdot Z)^2dv du+\int\limits_0^{\frac{1}{2}}\int\limits_{3}^{4}((4-v)(E_1+E_2+E_3+E_4+Z)\cdot Z)^2dv du+\\
+\int\limits_{\frac{1}{2}}^{1}\int\limits_0^{2-2u}(((2-2u)(E_1+E_2)+2(E_3+E_4)+(4-v)Z)\cdot Z)^2dv du+\\
+\int\limits_{\frac{1}{2}}^{1}\int\limits_{2-2u}^{2+2u}(((2-2u)(E_1+E_2)+\frac{6-2u-v}{2}(E_3+E_4)+(4-v)Z)\cdot Z)^2dv du+\\
+\int\limits_{\frac{1}{2}}^{1}\int\limits_{2+2u}^{4}(((4-v)(E_1+E_2+E_3+E_4+Z))\cdot Z)^2dv du=\int\limits_0^{\frac{1}{2}}\int\limits_0^{1}v^2dv du+\int\limits_0^{\frac{1}{2}}\int\limits_{1}^{3}dv du+\int\limits_0^{\frac{1}{2}}\int\limits_{3}^{4}(4-v)^2dv du+\\
+\int\limits_{\frac{1}{2}}^{1}\int\limits_0^{2-2u}v^2dv du+\int\limits_{\frac{1}{2}}^{1}\int\limits_{2-2u}^{2+2u}(2-2u)^2dv du+\int\limits_{\frac{1}{2}}^{1}\int\limits_{2+2u}^{4}(4-v)^2dv du=\frac{1}{6}+1+\frac{1}{6}+\frac{1}{24}+\frac{5}{12}+\frac{1}{24}=\frac{11}{6}
\end{multline*}

Assume that $P'$ is not an intersection point of $Z$ and $E_3$ or $E_4$. Then $\ord_{P'}(N(u,v)|_Z)=0$. So, $S(W^{F_1,Z}_{\bullet,\bullet,\bullet};P')=\frac{11}{28}$. Assume that $P'$ is an intersection point of $Z$ and $E_3$. Then

\begin{multline*}
\int\limits_0^{\infty}\int\limits_0^{\infty}(P(u,v)\cdot Z)\cdot\ord_{P'}(N'_{F_1}(u)|_Z+N(u,v)|_Z)dv du=\int\limits_0^{\frac{1}{2}}\int\limits_{1}^{3}\frac{v-1}{2}dv du+\int\limits_0^{\frac{1}{2}}\int\limits_{3}^{4}(4-v)(v-2)dv du+\\
+\int\limits_{\frac{1}{2}}^{1}\int\limits_{2-2u}^{2+2u}(2-2u)\frac{2u+v-2}{2}dv du+\int\limits_{\frac{1}{2}}^{1}\int\limits_{2+2u}^{4}(4-v)(v-2)dv du=\frac{17}{12}.
\end{multline*}
So, $S(W^{F_1,Z}_{\bullet,\bullet,\bullet};P')=\frac{3}{14}\frac{11}{6}+\frac{6}{14}\frac{17}{12}=1$.

So, $\delta_P(X)\geq1$ (see \ref{Van3}).

\end{proof}

\section{Multiple fiber}

Assume that $\pi\colon X\rightarrow\pp^1\times\pp^1$ has a multiple fiber $C'=2C$. Let $F_1$ and $F_2$ be the fibers of $\pi_1$ and $\pi_2$ such that $F_1$ and $F_2$ contain $C$. Let $P\in C$ and $P\not\in D$.

\begin{lemma}
\label{Mult1} Assume that $P\in C$, $P\not\in D$ and $C$ contains singular points of type $A_1$. Then $\delta_P(X)>1$.
\end{lemma}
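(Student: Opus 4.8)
The plan is to apply the Abban--Zhuang estimate of Proposition \ref{Van2} with the surface $Y=F_2$ and the curve $Z=C$, the reduced multiple fiber. By Lemma \ref{Lemm2} the cubic del Pezzo surface $F_2$ is normal, and by Lemma \ref{Lemm3} it has at worst du Val singularities, so Lemma \ref{lem-singular-cubic} applies: $C$ passes through exactly two $A_1$ points of $F_2$, every other singular point of $F_2$ lies off $C$, and there is a unique $(-1)$-curve meeting $C$ away from the singularities. First I would record the intersection theory on $F_2$. Writing $E=D\cap F_2$, which is a $(-1)$-curve as in Lemma \ref{DeltaSmF2}, and using that the fiber of $\pi$ through $P$ is multiple so that $F_1|_{F_2}=2C$, one obtains $-K_{F_2}=E+2C$ together with $C^2=0$, $E\cdot C=1$, $E^2=-1$ (consistent with $-K_{F_2}^2=3$).

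Next I would compute the relevant Zariski decompositions. Restricting the positive parts of Proposition \ref{ZD} gives $P(u)|_{F_2}=E+2C$ for $0\le u\le 1$ and $P(u)|_{F_2}=(2-u)E+2C$ for $1\le u\le 2$. Since $E$ is the only curve against which $P(u)|_{F_2}-vC$ can become negative (its intersection with the unique $(-1)$-curve meeting $C$, and with $C$ itself, stays non-negative for $v\le 2$), the negative part $N(u,v)$ is supported on $E$, entering when $v>1$ (for $u\le 1$) or $v>u$ (for $1\le u\le 2$); in the extremal range the positive part collapses to $(2-v)(E+C)$.

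With these decompositions in hand the two $S$-invariants are routine. I expect to find $S(W^{F_2,C}_{\bullet,\bullet};C)=\tfrac{45}{56}$, so that $1/S(V^{F_2}_{\bullet,\bullet};C)=56/45>1$, matching $1/S_X(F_2)=56/45$ from Section \ref{sec-divisorial-stability}. For the refined invariant, the first summand of $S(W^{F_2,C}_{\bullet,\bullet,\bullet};P)$ evaluates to $\tfrac{11}{28}$. The crucial point is that the second summand vanishes: both $N'_{F_2}(u)$ and $N(u,v)$ are supported on $E$, so $N'_{F_2}(u)|_C+N(u,v)|_C$ is supported at the single point $E\cap C$, which lies on $D$; since $P\notin D$ by hypothesis, $\ord_P$ of this cycle is zero. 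Hence $S(W^{F_2,C}_{\bullet,\bullet,\bullet};P)=\tfrac{11}{28}$ for every admissible $P$.

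Finally I would feed this into Proposition \ref{Van2}. The pair $(F_2,C)$ is plt because $C$ is a smooth rational curve meeting only $A_1$ points of $F_2$, where the different $\Delta_C$ has coefficient $1/2$, while at a smooth point of $F_2$ the coefficient is $0$; thus $1-\ord_P(\Delta_C)\ge 1/2$, and
\[
\delta_P(X)\ge\min\left\{\frac{1-\ord_P(\Delta_C)}{S(W^{F_2,C}_{\bullet,\bullet,\bullet};P)},\ \frac{1}{S(V^{F_2}_{\bullet,\bullet};C)},\ \frac{1}{S_X(F_2)}\right\}\ge\min\left\{\frac{1/2}{11/28},\ \frac{56}{45}\right\}=\min\left\{\frac{14}{11},\frac{56}{45}\right\}>1.
\]
The main obstacle is the intersection-theoretic bookkeeping on the singular surface $F_2$---in particular verifying $-K_{F_2}=E+2C$ with $C^2=0$ and $E\cdot C=1$---and confirming that along $C$ the different coefficient is exactly $1/2$. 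This last point is precisely where the generality hypothesis $(\star)$, which forces the singularities along $C$ to be of type $A_1$, is indispensable: a worse singularity would raise the coefficient of $\Delta_C$ and could destroy the bound.
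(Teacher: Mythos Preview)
Your proposal is correct and follows essentially the same route as the paper: apply Proposition~\ref{Van2} with $Y=F_2$ and $Z=C$, identify $E=D\cap F_2$ as the sole curve entering the negative part via Lemma~\ref{lem-singular-cubic}, compute $S(W^{F_2}_{\bullet,\bullet};C)=\tfrac{45}{56}$ and $S(W^{F_2,C}_{\bullet,\bullet,\bullet};P)=\tfrac{11}{28}$, and conclude. Your write-up is in fact slightly more explicit than the paper's in two respects---you spell out why the second summand of $S(W^{F_2,C}_{\bullet,\bullet,\bullet};P)$ vanishes (the negative parts are supported on $E\subset D$ while $P\notin D$), and you make the role of the different $\Delta_C$ and hence of hypothesis~$(\star)$ visible (the $A_1$ assumption gives $1-\ord_P(\Delta_C)\ge 1/2$, which is exactly what is needed against $11/28$)---whereas the paper only records ``$\tfrac{11}{28}<\tfrac12$'' and cites Propositions~\ref{Van1} and~\ref{Van2}.
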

\begin{proof}
By Lemma \ref{lem-singular-cubic}, $C$ contains two singular points $Q_1$, $Q_2$ of type $A_1$.
By Lemma \ref{lem-singular-cubic}, there exists a unique $(-1)$-curve $E$ intersecting $C$ outside of singular points.
Remark \ref{lem-duVal_contractions} implies that $E=D\cap F_2$ (note that $E$ cannot pass through singular points of $F_2$ since $E$ is smooth).
According to Proposition \ref{ZD} we have $P(u)|_{F_2}=-K_{F_2}$, $N(u)|_{F_2}=0$ for $0\leq u\leq 1$ and $P(u)|_{F_2}=-K_{F_2}-(u-1)E$, $N(u)=(u-1)E$ for $1<u\leq 2$.
Computing the Zariski decomposition of the divisor $P(u)|_{F_2}-vC=P(u,v)+N(u,v)$, where \[
P(u,v) = \begin{cases}-K_{F_2}-vC, \quad \quad \text{for}\quad \quad 0\leq u\leq 1,\ \ 0\leq v\leq 1, \\
-K_{F_2}-vC-(v-1)E, \quad \quad \text{for}\quad \quad 0\leq u\leq 1,\ \ 1< v\leq 2, \\
-K_{F_2}-vC-(u-1)E, \quad \quad \text{for}\quad \quad 1\leq u\leq 2,\ \ 0\leq v\leq u, \\
-K_{F_2}-vC-(v-1)E, \quad \quad \text{for}\quad \quad 1\leq u\leq 2,\ \ u< v\leq 2.
\end{cases}
\]
and
\[
N(u,v)=\begin{cases}0\quad \quad \text{for}\quad \quad 0\leq u\leq 1,\ \ 0\leq v\leq 1, \\
(v-1)E, \quad \quad \text{for}\quad \quad 0\leq u\leq 1,\ \ 1< v\leq 2, \\
0\quad \quad \text{for}\quad \quad 1\leq u\leq 2,\ \ 0\leq v\leq u, \\
(v-u)E, \quad \quad \text{for}\quad \quad 1\leq u\leq 2,\ \ u< v\leq 2.
\end{cases}
\]

We obtain
\begin{multline*}
S(W^{F_2}_{\bullet,\bullet};C)=\frac{3}{(-K_X)^3}\int\limits_0^{\infty}(P(u)^2\cdot F_2)\cdot\ord_C(N(u)|_{F_2})du
+\frac{3}{(-K_X)^3}\int\limits_0^{\infty}\int\limits_0^{\infty}\vol(P(u)|_{F_2}-vC)dv du=\\
=\frac{3}{14}\int\limits_0^{1}\int\limits_0^{\infty}\vol(-K_{F_2}-vC)dv du+\frac{3}{14}\int\limits_1^{2}\int\limits_0^{\infty}\vol(-K_{F_2}-vC-(u-1)E)dvdu=\\
=\frac{3}{14}\int\limits_0^{1}\int\limits_0^{1}(-K_{F_2}-vC)^2dv du+\frac{3}{14}\int\limits_0^{1}\int\limits_{1}^{2}(-K_{F_2}-vC-(v-1)E)^2dv du+\\
+\frac{3}{14}\int\limits_1^{2}\int\limits_0^{u}(-K_{F_2}-vC-(u-1)E)^2dvdu+\frac{3}{14}\int\limits_1^{2}\int\limits_{u}^{2}(-K_{F_2}-vC-(v-1)E)^2dvdu=\\
=\frac{3}{14}\int\limits_0^{1}\int\limits_0^{1}(3-2v)dvdu+\frac{3}{14}\int\limits_0^{1}\int\limits_{1}^{2}(2-v)^2dvdu+\frac{3}{14}\int\limits_1^{2}\int\limits_0^{u}(3-2v-2(u-1)+2v(u-1)-(u-1)^2)dvdu+\\
+\frac{3}{14}\int\limits_1^{2}\int\limits_{u}^{2}(2-v)^2dvdu=\frac{3}{14}\left(2+\frac{1}{3}+\frac{4}{3}+\frac{1}{12}\right)=\frac{45}{56}<1.
\end{multline*}

Also, we have
\begin{multline*}
S(W^{F_2,C}_{\bullet,\bullet,\bullet};P)=\frac{3}{(-K_X)^3}\int\limits_0^{\infty}\int\limits_0^{\infty}(P(u,v)\cdot C)^2dv du+\\
+\frac{6}{(-K_X)^3}\int\limits_0^{\infty}\int\limits_0^{\infty}(P(u,v)\cdot C)\cdot\ord_P(N'_{F_2}(u)|_C+N(u,v)|_C)dv du=\\
=\frac{3}{14}\int\limits_0^{1}\int\limits_0^{1}1^2dv du+\frac{3}{14}\int\limits_0^{1}\int\limits_{1}^{2}(2-v)^2dv du+\\
+\frac{3}{14}\int\limits_1^{2}\int\limits_0^{u}(2-u)^2dvdu+\frac{3}{14}\int\limits_1^{2}\int\limits_{u}^{2}(2-v)^2dvdu=\\
=\frac{3}{14}\left(1+\frac{1}{3}+\frac{5}{12}+\frac{1}{12}\right)=\frac{11}{28}<\frac{1}{2}.
\end{multline*}

So, $\delta_P(X)\geq\frac{56}{45}>1$ (see Propositions \ref{Van1} and \ref{Van2}).
\end{proof}

\section{Proof of main results}

\begin{proof}[Proof of Theorem \ref{main-theorem}]
Let $X$ be a Fano threefold with Picard rank $3$ and degree $14$, and $P\in X$. Assume that $P\in D$. Then by Lemma \ref{DeltaD} we have $\delta_P(X)\geq\frac{8}{7}$. So, we may assume that $P\not\in D$.
Let $F_2$ be a fiber of $\pi_2$ that contains $P$ and $C$ be a fiber of $\pi$ that contains $P$. Assume that $C$ is not a multiple fiber. Assume that $F_2$ is smooth at $C$. By Lemma \ref{DeltaSmF2} we have $\delta_P(X)>1$. So, we may assume that $F_2$ has a singular point in $C$. Let $F_1$ be a fiber of $\pi_1$ that contains $P$. Then $F_1$ contains $C$. By Lemma \ref{Lemm1}, $F_1$ is smooth. Then by Lemma \ref{DeltaSmF1} we see that $\delta_P(X)\geq1$.

Suppose that there exists a prime divisor $E$ over $X$
with $P\in C_X(E)$ and $\beta_X(E)=0$. Then $A_X(E)/S_X(E)=1$, so equality is attained in the
estimate of Lemma \ref{DeltaSmF1}. By the equality cases in Propositions \ref{Van1}, \ref{Van2} and
\cite[Remark 1.7.32]{ChAll}, applied to the admissible flag whose first member is $F_1$, this implies
\[
\frac{A_X(E)}{S_X(E)}=\frac{1}{S_X(F_1)}.
\]
On the other hand, $X$ is divisorially stable, hence $\beta_X(F_1)>0$. Since $A_X(F_1)=1$, we get
$S_X(F_1)<1$, a contradiction.

So, we may assume that $C$ is a multiple fiber. By the generality assumption $(\star)$, there are two singular points of type $A_1$ on $C\subset F_2$. By Lemma \ref{Mult1} we see that $\delta_P(X)>1$. This completes the proof.
\end{proof}

\begin{proof}[Proof of Corollary \ref{cor-a1-a2}]
According to \cite[Lemma 1.5]{Zh}, the singularities along a multiple fiber of a conic bundle $\pi|_{F_2}\colon F_2\to \mathbb{P}^1$, can have one of the following types: $2A_1$, $A_3$ or $D_m$ for $m\geq 4$. According to our assumption, the latter two cases are not possible. Thus, $F_2$ has singularities of type $A_1$ along a multiple fiber of $\pi|_{F_2}$, and so $X$ satisfies the generality assumption $(\star)$. Hence by Theorem \ref{main-theorem} the variety $X$ is K-stable.
\end{proof}

\begin{thebibliography}{lll}

\bibitem[ACC+]{ChAll}
C. Araujo, A.-M. Castravet, I. Cheltsov, K. Fujita, A.-S. Kaloghiros, J. Martinez-Garcia, C. Shramov, H. S\"uss, N. Viswanathan.
\newblock {\em The Calabi problem for Fano threefolds.}
\newblock MPIM preprint, 2021.

\bibitem[AZ20]{AZ20}
H. Abban, Z. Zhuang,
\newblock {\em Stability of Fano varieties via admissible flags.}
\newblock preprint, arXiv:2003.13788, 2020.

\bibitem[BL23]{BL23}
G. Belousov, K. Loginov.
\newblock {\em K-stability of Fano threefolds of rank 4 and degree 24.}
\newblock Eur. J. Math., 9 (2023), 80.

\bibitem[BrW79]{BrW79}
J.W. Bruce, C.T.C. Wall.
\newblock {\em On the classification of cubic surfaces.}
\newblock J. of the London Math. Soc., 19 (2), 245--256 (1979).


\bibitem[CKGSh23]{CKGSh23}
I. Cheltsov, I. Krylov, J. Martinez-Garcia, E. Shinder.
\newblock {\em On maximally non-factorial nodal Fano threefolds.}
\newblock arXiv:2304.11334, 2023.

\bibitem[ChPr21]{ChPr21}
I. Cheltsov, Yu. Prokhorov.
\newblock {\em Del Pezzo surfaces with infinite automorphism groups .}
\newblock Algebraic Geometry, 8 (3), 319--357 (2021).

\bibitem[CoTs88]{CoTs88}
D. F. Coray, M. A. Tsfasman.
\newblock {\em Arithmetic on Singular Del Pezzo Surfaces.}
\newblock Proc. of London Math. Soc., Vol. s3-57 (1), 25--87 (1988).


\bibitem[De23]{De23}
E. Denisova.
\newblock {\em $\delta$-invariants of cubic surfaces with Du Val singularities.}
\newblock arXiv:2311.14181 (2023).



\bibitem[Fu16]{Fu16}
K. Fujita.
\newblock {\em On K-stability and the volume functions of Q-Fano varieties.}
\newblock Proc. Lond. Math. Soc. 113, 541--582 (2016).

\bibitem[Fu19]{Fu19}
K. Fujita.
\newblock {\em A valuative criterion for uniform K-stability of Q-Fano varieties.}
\newblock J. Reine Angew. Math. 751, 309--338 (2019).

\bibitem[HW81]{HW81}
F. Hidaka and K. Watanabe.
\newblock {\em Normal Gorenstein Surfaces with Ample Anti-canonical Divisor}.
\newblock Tokyo J. of Math. Vol 04, no. 2, 319--330 (1981).

\bibitem[Laz04]{Laz04}
R. Lazarsfeld.
\newblock {\em Positivity in algebraic geometry, I: Classical setting: line bundles and linear series.}
\newblock Ergebnisse der Mathematik und ihrer Grenzgebiete. (3) 48, Springer, Berlin, 2004.

\bibitem[Li17]{Li17}
C. Li.
\newblock {\em K-semistability is equivariant volume minimization.}
\newblock Duke Math. J. 166, no. 16, 3147--3218 (2017).




\bibitem[Mat95]{Mat95}
K. Matsuki.
\newblock {\em Weyl groups and birational transformations among minimal models.}
\newblock Memoirs of the American Mathematical Society 116 (1995).

\bibitem[Mat23]{Mat23}
K. Matsuki.
\newblock {\em Addendum/Erratum to the paper "Weyl groups and Birational transformations among minimal models".}
\newblock arXiv:2401.13431, 2023.

\bibitem[MM83]{MM83}
S. Mori, S. Mukai.
\newblock {\em On Fano 3-fold with $B_2\geq 2$.}
\newblock Advanced Studies in Pure Mathematics 1 (1983), 101 --- 129.

\bibitem[PW]{PW}
J. Park, J. Won.
\newblock {\em K-stability of smooth del Pezzo surfaces.}
\newblock Math. Annalen 372, no. 3--4, 1239 --- 1276 (2017).

\bibitem[Ta22]{Ta22}
K. Takeuchi.
\newblock {\em Weak Fano threefolds with del Pezzo fibration.}
\newblock European Journal of Mathematics 8 (2022), 1225--1290.


\bibitem[Xu21]{Xu21}
C. Xu.
\newblock {\em K-stability of Fano varieties: an algebro-geometric approach.}
\newblock EMS Surveys in Mathematica Sciences, vol. 8 1/2, 265--354 (2021).

\bibitem[Zh88]{Zh}
D. Q. Zhang.
\newblock {\em Logarithmic del Pezzo surfaces of rank one with contraciible boundries.}
\newblock Osaka J. Math., vol. 25, 461--497 (1988).

\end{thebibliography}
\end{document}